%
%

\documentclass[a4paper,11pt,reqno]{amsart}
\usepackage[a4paper,twoside,centering]{geometry}
\usepackage[arrow,matrix,curve]{xy}

\title{Algebras of quasi-quaternion type}
\author{Sefi Ladkani}
\address{%
Institut des Hautes \'{E}tudes Scientifiques \\
Le Bois Marie, 35, route de Chartres \\
91440 Bures-sur-Yvette, France}
\email{sefil@ihes.fr}

\DeclareMathOperator{\add}{add}
\DeclareMathOperator{\End}{End}
\DeclareMathOperator{\HH}{HH}
\DeclareMathOperator{\per}{per}
\DeclareMathOperator{\stmod}{\underline{mod}}

\newcommand{\balpha}{\bar{\alpha}}
\newcommand{\oa}{\omega_\alpha}
\newcommand{\oba}{\omega_{\balpha}}
\newcommand{\eps}{\varepsilon}

\newcommand{\cC}{\mathcal{C}}
\newcommand{\gL}{\Lambda}
\newcommand{\bZ}{\mathbb{Z}}
\newcommand{\wh}{\widehat}

\newtheorem{theorem}{Theorem}[section]
\newtheorem{prop}[theorem]{Proposition}
\newtheorem{lemma}[theorem]{Lemma}
\newtheorem{cor}[theorem]{Corollary}

\theoremstyle{definition}
\newtheorem{defn}[theorem]{Definition}

\newtheorem{example}[theorem]{Example}

\numberwithin{equation}{section}

\begin{document}

\begin{abstract}
We define algebras of quasi-quaternion type, which are symmetric algebras
of tame representation type whose stable module category has certain
structure similar to that of the algebras of quaternion type introduced
by Erdmann. We observe that symmetric tame algebras that are also
2-CY-tilted are of quasi-quaternion type.

We present a combinatorial construction of such algebras by introducing the
notion of triangulation quivers.
The class of algebras that we get contains Erdmann's algebras of quaternion
type on the one hand and the Jacobian algebras of the quivers with potentials
associated by Labardini to triangulations of closed surfaces with punctures
on the other hand,
hence it serves as a bridge between modular representation theory of finite
groups and cluster algebras.
\end{abstract}

\maketitle

\section{Introduction}

The purpose of this note is to report on some connections between
representation theory of groups and cluster algebras, more precisely,
between algebras of quaternion type introduced and studied by
Erdmann~\cite{Erdmann90} and others and 2-CY-tilted algebras which
are endomorphism algebras of cluster-tilting objects in 2-Calabi-Yau
categories arising in the additive categorification of cluster algebras.

Recall that an algebra of quaternion type is a tame, symmetric, indecomposable
algebra whose non-projective modules are $\Omega$-periodic with period
dividing 4 and its Cartan matrix is non-singular. The possible quivers
with relations of such algebras were classified by Erdmann~\cite{Erdmann90},
and later works of Holm~\cite{Holm99} and
Erdmann-Skowronski~\cite{ES06} established that the algebras
given in those lists are actually of quaternion type.

It seems natural to remove the condition that the Cartan matrix is
non-singular and to consider tame, symmetric, indecomposable algebras
whose non-projective modules are $\Omega$-periodic of period dividing 4.
In terms of the stable module category, the last condition means that the
4-th power of the suspension (shift) functor acts as the identity on
objects.
Such algebras will be called \emph{algebras of quasi-quaternion type}.

We construct a large class of algebras of quasi-quaternion type that are also
2-CY-tilted. It turns out that this class contains in particular:
\begin{itemize}
\item
All the algebras appearing in Erdmann's lists of algebras of quaternion
type~\cite{Erdmann90};

\item
All the Jacobian algebras of the quivers with potentials associated by
Labardini to triangulations of closed surfaces with punctures
\cite{Labardini09}.
\end{itemize}

Our construction has several consequences, both for the representation theory
of finite-dimensional algebras as well as for theory of quivers with
potentials. Namely, we obtain:
\begin{enumerate}
\renewcommand{\labelenumi}{\theenumi.}
\item
A new proof that the algebras in Erdmann's lists are of quaternion type;

\item
New tame symmetric algebras with periodic modules which seem not to
appear in the classification announced by Erdmann and
Skowronski~\cite{ES08};

\item
New symmetric 2-CY-tilted algebras in addition to the ones arising from
odd-dimensional isolated hypersurface singularities~\cite{BIKR08};

\item
Infinitely many non-degenerate potentials with pairwise non-isomorphic
Jacobian algebras on the adjacency quiver~\cite{FST08} of any triangulation
of a closed surface with exactly one puncture (and arbitrary genus).
\end{enumerate}

We observe that the property of being of quasi-quaternion type
is preserved under derived equivalences (see below), hence
our strategy is to construct some of these algebras from combinatorial
data and then produce more algebras using derived equivalences.
To this end we introduce \emph{triangulation quivers}.
These are quivers having the property that at any vertex there are exactly two
incoming arrows and two outgoing arrows, together with the data of a
permutation $f$ on the set of arrows such that $f(\alpha)$ starts where
an arrow $\alpha$ ends, subject to the condition that $f^3$ is the identity
(this last condition justifies the term ``triangulation'').
These data give rise to another permutation $g$ and an involution
$\alpha \mapsto \balpha$ on the set of arrows, see Section~\ref{ssec:quiver}.

A triangulation quiver can be dually encoded as a ribbon graph
whose nodes are the cycles of the permutation $g$, its edges are the
vertices of the quiver and the cyclic ordering of the edges around each node
is induced by $g$.
Thus functions on the nodes can be viewed as functions on the arrows that
are constant on $g$-cycles.
Given multiplicities and scalars
associated to the nodes, one can construct from such
data a Brauer graph algebra. We construct another algebra which we
call \emph{triangulation algebra} and prefer to work in a complete
setting; each arrow of the quiver gives rise to a certain commutativity
relation and the algebra is defined as the quotient of the complete
path algebra by the closure of the ideal generated by these commutativity
relations. \emph{A-priori} it is not clear that the triangulation algebra
is finite-dimensional, but it turns out that for most triangulation
quivers and multiplicities, the triangulation algebra
satisfies certain additional zero-relations of length 3 which allow to prove
that it is finite-dimensional.

\begin{figure}
\[
\begin{array}{lcc}
\text{\textbf{Surface}} & \text{\textbf{Ribbon graph}}
& \text{\textbf{Triangulation quiver}}
\\ \hline & & \\
\text{Monogon, unpunctured} &
\begin{array}{c}
\xymatrix{
\circ \ar@{-}@(ul,dl)_{1}
}
\end{array}
&
\begin{array}{c}
\xymatrix@=1pc{
{\bullet_1} \ar@(dl,ul)[]^{\alpha} \ar@(dr,ur)[]_{\beta}
}
\end{array}
\\ & & (\alpha) (\beta)
\\ \hline
\text{Monogon, one puncture} &
\begin{array}{c}
\xymatrix@=1.5pc{
& {\circ} \ar@{-}[r]^1
& {\circ} \ar@{-}@/_1.5pc/[ll]_(.7)2 \ar@{-}@/^1.5pc/[ll]^(.7)2
}
\end{array}
&
\begin{array}{c}
\xymatrix{
{\bullet_1} \ar@(ul,dl)[]_{\alpha} \ar@<-0.5ex>[r]_{\beta}
& {\bullet_2} \ar@(dr,ur)[]_{\eta} \ar@<-0.5ex>[l]_{\gamma}
}
\end{array}
\\ & &  (\alpha \beta \gamma) (\eta)
\\ \hline
\text{Triangle, unpunctured} &
\begin{array}{c}
\xymatrix{
{\circ} \ar@{-}@(ur,ul)[]_3 \ar@{-}@(l,dl)[]_1 \ar@{-}@(r,dr)[]^2
}
\end{array}
&
\begin{array}{c}
\xymatrix@=1pc{
& {\bullet_3} \ar@(ur,ul)[]_{\alpha_3} \ar[ddl]_{\beta_3} \\ \\
{\bullet_1} \ar@(ul,dl)[]_{\alpha_1} \ar[rr]_{\beta_1}
&& {\bullet_2} \ar@(dr,ur)[]_{\alpha_2} \ar[uul]_{\beta_2}
}
\end{array}
\\ & &
(\alpha_1) (\alpha_2) (\alpha_3) (\beta_1 \beta_2 \beta_3)
\\ \hline
\text{Sphere, three punctures} &
\begin{array}{c}
\xymatrix@=1pc{
{\circ} \ar@{-}[rr]^3 && {\circ} \ar@{-}[ddl]^2 \\ \\
& {\circ} \ar@{-}[uul]^1
}
\end{array}
&
\begin{array}{c}
\xymatrix@=1pc{
& {\bullet_3} \ar@<-0.5ex>[ddl]_{\alpha_3} \ar@<-0.5ex>[ddr]_{\beta_3} \\ \\
{\bullet_1} \ar@<-0.5ex>[rr]_{\alpha_1} \ar@<-0.5ex>[uur]_{\beta_1}
&& {\bullet_2} \ar@<-0.5ex>[ll]_{\beta_2} \ar@<-0.5ex>[uul]_{\alpha_2}
}
\end{array}
\\ & &
(\alpha_1 \alpha_2 \alpha_3) (\beta_3 \beta_2 \beta_1)
\\ &
\begin{array}{c}
\xymatrix@=1.5pc{
& {\circ} \ar@{-}[r]^1
& {\circ} \ar@{-}@/_1.5pc/[ll]_(.7)2 \ar@{-}@/^1.5pc/[ll]^(.7)2 \ar@{-}[r]^3
& {\circ}
}
\end{array}
&
\begin{array}{c}
\xymatrix{
{\bullet_1} \ar@(ul,dl)[]_{\alpha} \ar@<-0.5ex>[r]_{\beta}
& {\bullet_2} \ar@<-0.5ex>[r]_{\delta} \ar@<-0.5ex>[l]_{\gamma}
& {\bullet_3} \ar@(dr,ur)[]_{\xi} \ar@<-0.5ex>[l]_{\eta}
}
\end{array}
\\ & &
(\alpha \beta \gamma) (\delta \xi \eta)
\\ \hline
\text{Torus, one puncture} &
\begin{array}{c}
\xymatrix@=1.5pc{
& {\circ} \ar@{-}[dr]^2  \\
{\circ} \ar@{-}[ur]^1 \ar@{-}[rr]^3 && {\circ} \ar@{-}[dl]^1 \\
& {\circ} \ar@{-}[ul]^2
}
\end{array}
&
\begin{array}{c}
\xymatrix@=1pc{
& {\bullet_3} \ar@<-0.5ex>[ddl]_{\alpha_2} \ar@<0.5ex>[ddl]^(.4){\alpha_5} \\ \\
{\bullet_1} \ar@<-0.5ex>[rr]_{\alpha_0} \ar@<0.5ex>[rr]^(.4){\alpha_3} &&
{\bullet_2} \ar@<-0.5ex>[uul]_{\alpha_1} \ar@<0.5ex>[uul]^(.4){\alpha_4}
}
\end{array}
\\ & & (\alpha_4 \alpha_2 \alpha_0) (\alpha_5 \alpha_3 \alpha_1)
\\ \hline
\end{array}
\]
\caption{The triangulation quivers with at most 3 vertices.
We list the marked surface, the ribbon graph(s) corresponding to its
triangulation(s) and the associated triangulation quivers, where we write the
permutation $f$ in cycle form below each quiver. For the torus, all nodes in
the ribbon graph should be identified and edges with the same label are also
identified.}
\label{fig:quivers}
\end{figure}

Our main results concerning triangulation algebras are summarized in
the next theorem. For the precise definitions of the terms occurring in
the formulation, we refer the reader to Section~\ref{ssec:algebra}.

\begin{theorem} \label{t:quasi}
Let $(Q,f)$ be a connected triangulation quiver, let $K$ be a field, let
$m \colon Q_1 \to \bZ_{>0}$ and $c \colon Q_1 \to K^{\times}$ be
$g$-invariant functions of
multiplicities and scalars, and assume that $m$ is admissible.
Assume further that the associated ribbon graph with multiplicities is 
not one of the two exceptional cases shown in Figure~\ref{fig:except}
and consider the corresponding triangulation algebra $\gL$ defined by
\[
\gL = \wh{KQ} / 
\overline{\langle \balpha \cdot f(\balpha) -
c_\alpha \oa^{m_\alpha-1} \cdot \oa' \rangle}_{\alpha \in Q_1} .
\]
\begin{enumerate}
\renewcommand{\theenumi}{\alph{enumi}}
\item \label{it:t:finite}
$\gL$ is finite dimensional; it has a presentation as quiver with relations
\begin{equation} \label{e:qrel}
\gL \simeq KQ /
\langle \alpha \cdot f(\alpha) \cdot gf(\alpha) \,,\,
\balpha \cdot f(\balpha) - c_\alpha \oa^{m_\alpha-1} \cdot \oa' \rangle_{\alpha \in Q_1}
\end{equation}
\item \label{it:t:symmetric}
$\gL$ is symmetric.

\item \label{it:t:tame}
$\gL$ degenerates to the corresponding Brauer graph algebra $\Gamma$ given by
\[
\Gamma = KQ /
\langle \alpha \cdot f(\alpha) \,,\,
c_\alpha \oa^{m_\alpha} - c_{\balpha} \oba^{m_{\balpha}}
\rangle_{\alpha \in Q_1}
\]
and hence $\gL$ is of tame representation type.

\item \label{it:t:potential}
The elements
$\rho_\alpha = f(\alpha) f^2(\alpha) - c_\alpha \omega_{g(\alpha)}^{m_\alpha-1}
\omega_{g(\alpha)}'$ satisfy
$\sum_{\alpha \in Q_1} [\alpha, \rho_\alpha] = 0$ in $KQ$, hence
$\gL$ is a Jacobian algebra of a hyperpotential
(see~\cite{Ladkani14} for the definition)
and therefore it is 2-CY-tilted, i.e.\ there is a 2-Calabi-Yau
triangulated category $\cC$ and a cluster-tilting object $T$ in $\cC$
such that $\gL \simeq \End_{\cC}(T)$.

\item \label{it:t:quasi}
$\gL$ is of quasi-quaternion type.

\item \label{it:t:quasimut}
More generally,
for any cluster-tilting object $T'$ in $\cC$
which is reachable from $T$ by a sequence of mutations, the algebra
$\End_{\cC}(T')$
is derived equivalent to $\gL$ and of quasi-quaternion type.
\end{enumerate}
\end{theorem}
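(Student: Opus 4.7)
The plan is to establish the six parts in sequence, with each subsequent part leveraging the earlier ones. For part~(\ref{it:t:finite}), I would work in the complete path algebra $\wh{KQ}$ and show that the commutativity relations $\balpha \cdot f(\balpha) = c_\alpha \oa^{m_\alpha-1} \cdot \oa'$, combined with the involution $\alpha \mapsto \balpha$, the identity $f^3 = 1$, and the definition of $g$, already force the length-3 zero-relations $\alpha \cdot f(\alpha) \cdot gf(\alpha) = 0$ whenever $m$ is admissible and the ribbon graph avoids the two configurations of Figure~\ref{fig:except}. Once these zero-relations are in hand, one exhibits an explicit finite spanning set indexed by the $g$-cycles and proper prefixes of the corresponding $\oa$, and verifies from the defining relations that the set is a basis; this simultaneously proves finite-dimensionality and the presentation in~(\ref{e:qrel}).

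For part~(\ref{it:t:symmetric}), I would define a symmetrizing form $\lambda \colon \gL \to K$ that vanishes on the radical except on the socle, where it sends each canonical socle element represented by $c_\alpha \oa^{m_\alpha}$ to~$1$; the commutativity relations make this well-defined, and non-degeneracy of $(x,y) \mapsto \lambda(xy)$ is immediate from the explicit basis. For part~(\ref{it:t:tame}), I would rescale the scalars $c_\alpha$ by a one-parameter deformation interpolating between $\gL$ and $\Gamma$; a dimension count based on part~(\ref{it:t:finite}) shows the family is flat, and since Brauer graph algebras are tame (being essentially special biserial), the standard results on tameness under degeneration yield tameness of $\gL$. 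For part~(\ref{it:t:potential}), the crux is the identity $\sum_{\alpha \in Q_1}[\alpha, \rho_\alpha] = 0$ in $KQ$, which I would verify by expanding each commutator and matching the resulting monomials in pairs using the involution and the $f$-orbit structure; the 2-CY-tilted conclusion then follows from the cited result of~\cite{Ladkani14} on Jacobian algebras of hyperpotentials.

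Part~(\ref{it:t:quasi}) is then immediate by combining parts~(\ref{it:t:symmetric}), (\ref{it:t:tame}), and~(\ref{it:t:potential}) with the general observation announced in the abstract that symmetric tame 2-CY-tilted algebras are of quasi-quaternion type. For part~(\ref{it:t:quasimut}), each mutation of cluster-tilting objects in $\cC$ induces a derived equivalence between the corresponding endomorphism algebras (by the standard results on 2-CY-tilted algebras and mutation), so every reachable $\End_{\cC}(T')$ is derived equivalent to~$\gL$; since being of quasi-quaternion type is preserved under derived equivalence, as noted earlier in the introduction, this part follows. The main obstacle is part~(\ref{it:t:finite}): the derivation of the length-3 zero-relations and the precise identification of the two exceptional cases require careful combinatorial analysis of the interaction between $f$, $g$, and the multiplicities, and this is where most of the geometric content of the theorem really sits; once these zero-relations are established, the remaining parts proceed by arguments that are largely standard modulo the structure of $(Q,f)$.
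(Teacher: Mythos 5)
Your outline of parts (a)--(e) follows essentially the same route as the paper: the length-$3$ zero-relations are derived from the commutativity relations inside the completed path algebra (this is precisely where admissibility and the exclusion of the two exceptional ribbon graphs enter), symmetry comes from a socle-supported form, tameness comes from degenerating $\gL$ to the Brauer graph algebra $\Gamma$ (special biserial, hence tame) and invoking Geiss's theorem on degenerations, and part (e) is reduced to the separate observation that a tame, symmetric, indecomposable $2$-CY-tilted algebra is of quasi-quaternion type, which the paper isolates as a proposition (the point being that for a symmetric $2$-CY-tilted algebra the stable module category is $3$-Calabi--Yau, forcing $\Omega^4 \simeq \mathrm{id}$). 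Up to the level of detail one can expect from a sketch, this all matches.

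The genuine gap is in part (f). You assert that a mutation of cluster-tilting objects in $\cC$ induces a derived equivalence of the corresponding endomorphism algebras ``by the standard results on $2$-CY-tilted algebras and mutation.'' No such general result exists: neighbouring $2$-CY-tilted algebras are in general only \emph{nearly Morita equivalent} (Buan--Marsh--Reiten), not derived equivalent. Already for cluster-tilted algebras of Dynkin type a single mutation can change the derived class (a hereditary algebra mutates to a self-injective one, and these have different global dimensions). The derived equivalence in part (f) is exactly where the symmetry established in part (b) must be fed back in: this is the content of Theorem~\ref{t:dereq}\eqref{it:algmut}, which uses the Herschend--Iyama result that weak symmetry is preserved under mutation of cluster-tilting objects to conclude that both Okuyama--Rickard two-term complexes $U^{\pm}_P(\gL)$ are tilting complexes with endomorphism ring $\gL'$; iterating this along the mutation sequence yields the derived equivalence, and only then does the (correctly cited) invariance of quasi-quaternion type under derived equivalence finish the argument. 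As written, your step ``each mutation induces a derived equivalence'' is false without the symmetry hypothesis, so part (f) does not go through.
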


The exceptional cases are dealt with in the next proposition.
\begin{prop} \label{p:quasiex}
Let $(Q,f)$ be a connected triangulation quiver, let $K$ be a field, let
$m \colon Q_1 \to \bZ_{>0}$ and $c \colon Q_1 \to K^{\times}$ be
$g$-invariant functions of multiplicities and scalars.
Assume that the associated ribbon graph with multiplicities is one of the
two exceptional cases shown in Figure~\ref{fig:except} and that moreover:
\begin{itemize}
\item
$\prod_{\alpha \in Q_1} c_\alpha \neq 1$ in the punctured monogon case; or
\item
$c_\alpha c_{\balpha} c_{f(\alpha)} c_{f(\balpha)} \neq 1$ for some $\alpha \in Q_1$
in the tetrahedron case.
\end{itemize}
Then the statements of Theorem~\ref{t:quasi} hold for the triangulation
algebra $\gL$ with the following modifications of claims~\eqref{it:t:finite}
and~\eqref{it:t:tame}:
\begin{enumerate}
\item[$(\mathrm{a}')$]
$\gL$ is finite dimensional; it has a presentation as quiver with relations
\[
\gL \simeq KQ /
\langle \balpha \cdot f(\balpha) - c_\alpha \oa^{m_\alpha-1} \cdot \oa'
\rangle_{\alpha \in Q_1}
\]
and the zero relations $\alpha \cdot f(\alpha) \cdot gf(\alpha)$ follow from
the commutativity relations.

\item[$(\mathrm{c}')$] \label{it:p:tame}
$\gL$ is of tame representation type.
\end{enumerate}
\end{prop}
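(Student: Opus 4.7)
The strategy is direct case analysis, exploiting that each exceptional ribbon graph produces an algebra with only a handful of generators and relations, so that all commutativity manipulations can be made explicit. The plan breaks into three natural parts, corresponding to the modified claim (a$'$), the claims (b), (d), (e), (f) that are inherited unchanged from Theorem~\ref{t:quasi}, and the modified tameness claim (c$'$).

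For part (a$'$) the key point is that under the scalar hypothesis the cubic zero relations $\alpha \cdot f(\alpha) \cdot gf(\alpha)$ need not be imposed separately: they are already forced by the commutativity relations. Starting from a representative path $\alpha \cdot f(\alpha) \cdot gf(\alpha)$, I would apply the commutativity relation $\bar\gamma \cdot f(\bar\gamma) = c_\gamma \omega_\gamma^{m_\gamma-1} \omega_\gamma'$ successively along the $f$- and $g$-cycles of the exceptional quiver. Because the graph is small, the rewriting process closes up on itself after a full loop, producing an identity of the form $(1-s) \cdot \alpha \cdot f(\alpha) \cdot gf(\alpha) = 0$, where $s$ equals $\prod_{\alpha \in Q_1} c_\alpha$ in the punctured monogon case and $c_\alpha c_{\balpha} c_{f(\alpha)} c_{f(\balpha)}$ in the tetrahedron case. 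The hypothesis $s \neq 1$ then yields the zero relation, and finite dimensionality of $\gL$ is inherited from the corresponding argument of Theorem~\ref{t:quasi}(a) applied to the resulting presentation.

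Claims (b), (d), (e), (f) should then require no additional work beyond (a$'$). The construction of the Nakayama symmetrizing form, the verification of the cyclic derivative identity $\sum_\alpha [\alpha, \rho_\alpha] = 0$ from which the hyperpotential and 2-CY-tilted structure follow, the quasi-quaternion conclusion, and the derived-equivalence argument underlying invariance under mutation are all purely local computations that make no use of the non-exceptional hypothesis; once part (a$'$) provides the same shape of presentation by quiver and relations as in Theorem~\ref{t:quasi}, these arguments can be invoked verbatim.

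The main obstacle is part (c$'$), since this is the one place where the exceptional cases genuinely differ. The Brauer graph algebra $\Gamma$ appearing in Theorem~\ref{t:quasi}(c) satisfies relations that, when traced around the exceptional ribbon graph, force precisely the scalar identity that the hypothesis excludes. Consequently $\gL$ is not a degeneration of the naive $\Gamma$ and a different route to tameness is needed. My approach would be case-by-case: using the explicit presentation from (a$'$), identify $\gL$ up to a scalar renormalization of the arrows with a concrete algebra from a known list of tame symmetric algebras --- for the punctured monogon case one expects a match inside Erdmann's classification of algebras of quaternion type, and for the tetrahedron case a similar explicit identification should be available. Alternatively one can search for an auxiliary flat family that degenerates $\gL$ to a different tame algebra while avoiding the forbidden scalar identity. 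The technical bulk of the work lies in handling uniformly the full range of admissible multiplicities $m$ and of scalars $c$ satisfying the given inequality, and this is where I expect the genuine difficulty to sit.
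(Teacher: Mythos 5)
Your part $(\mathrm{a}')$ is in line with what the paper does (the paper only sketches this, remarking that the zero relations follow from the commutativity relations): in the exceptional cases one has $m_\alpha n_\alpha=3$ for every arrow, so every defining relation identifies two paths of length $2$ up to a scalar; rewriting $\alpha\cdot f(\alpha)\cdot gf(\alpha)$ around the quiver therefore returns the same path multiplied by exactly the scalar product excluded by the hypothesis, and the path vanishes. (One can check this explicitly for the punctured monogon: with $c_\alpha=c_1$ on the loop of multiplicity $3$ and $c_2$ on the other $g$-cycle, one gets $\alpha\beta\eta=c_1c_2^3\,\alpha\beta\eta$.) The inheritance of (b), (d), (e), (f) is also as in the paper.

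The genuine divergence, and the gap, is in $(\mathrm{c}')$. The paper's proof is a one-line identification: the two exceptional triangulation algebras are symmetric algebras \emph{of tubular type}, appearing in the Bia{\l}kowski--Skowro\'nski classification \cite{BS03}, and such algebras are known to be tame (indeed of polynomial growth). Your proposed match ``inside Erdmann's classification of algebras of quaternion type'' cannot work for the tetrahedron case: that quiver has six vertices, whereas Erdmann's lists only contain algebras with at most three simple modules, so there is no candidate to match against. For the punctured monogon the proposal is also insufficient as stated: Erdmann's lists record \emph{necessary} presentations of algebras of quaternion type and do not by themselves certify tameness; the tameness of the listed algebras was established by Holm precisely via the degeneration-to-special-biserial argument that you have just observed is unavailable in the exceptional cases, so pointing back at the list is circular at the borderline parameters. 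Your diagnosis of why claim (c) fails is also slightly off: the Brauer graph algebra $\Gamma$ is perfectly well defined for every choice of scalars; what breaks down in the exceptional cases is the degeneration of $\gL$ to $\Gamma$, not the existence of $\Gamma$ or any scalar identity inside it. The correct repair is the one the paper uses: recognize the two algebras within the known tame class of symmetric algebras of tubular type.
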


\begin{figure}
\[
\begin{array}{ccc}
\begin{array}{c}
\xymatrix{
& {\circ_3} \ar@{-}[r]
& {\circ_1} \ar@{-}@/_1.5pc/[ll] \ar@{-}@/^1.5pc/[ll]
}
\end{array}
& \qquad &
\begin{array}{c}
\xymatrix{
{\bullet} \ar@(ul,dl)[] \ar@<-0.5ex>[r]
& {\bullet} \ar@(dr,ur)[] \ar@<-0.5ex>[l]
}
\end{array}
\\ \\ \\
\begin{array}{c}
\xymatrix@=1pc{
&& {\circ_1} \ar@{-}[dddll] \ar@{-}[dddrr] \ar@{-}[dd] \\ \\
&& {\circ_1} \ar@{-}[dll] \ar@{-}[drr] \\
{\circ_1} \ar@{-}[rrrr] &&&& {\circ_1}
}
\end{array}
&&
\begin{array}{c}
\xymatrix@=0.5pc{
& {\bullet} \ar[rr] \ar@/^/[dddd]
&& {\bullet} \ar[ddr] \ar@/_3pc/[ddlll] \ar@{<-}@/^3pc/[dddd] \\ \\
{\bullet} \ar[uur] & && &
{\bullet} \ar[ddl] \ar@/^/[uulll] \ar@{<-}@/_/[ddlll] \\ \\
& {\bullet} \ar[uul]
&& {\bullet} \ar[ll] \ar@{<-}@/^3pc/[uulll]
}
\end{array}
\end{array}
\]
\caption{Exceptional ribbon graphs with multiplicities:
A monogon with one puncture (top) and a tetrahedron, which is a
triangulation of a sphere with 4 punctures (bottom).
The ribbon graph is shown on the left, where at each node we indicate
its multiplicity. The corresponding triangulation quiver is shown
on the right.}
\label{fig:except}
\end{figure}
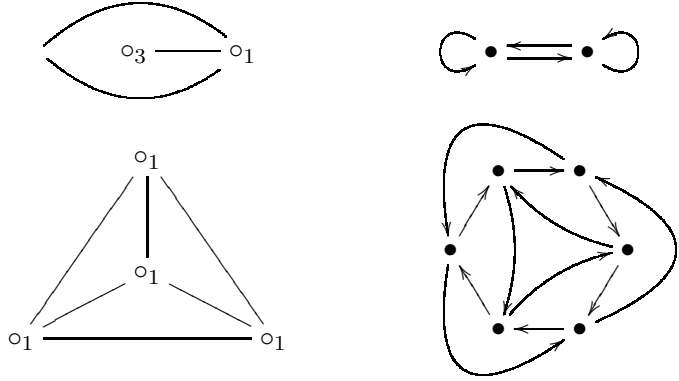

One could also formulate a slightly more general version of
Theorem~\ref{t:quasi} and Proposition~\ref{p:quasiex} by replacing the
multiplicities and the scalars by power series in $K[[x]]$, i.e.\
replace the functions $m$ and $c$ with a $g$-invariant function
$q \colon Q_1 \to K[[x]]$ and consider the algebra
\begin{equation} \label{e:q}
\gL = \wh{KQ} / 
\overline{\langle \balpha \cdot f(\balpha) -
q_\alpha(\oa) \cdot \oa' \rangle}_{\alpha \in Q_1} ,
\end{equation}
so that the case treated here corresponds to the choice of
$q_\alpha(x) = c_\alpha x^{m_\alpha-1}$.
However, in most cases the algebra $\gL$ in~\eqref{e:q} depends only on the
leading term of each power series $q_\alpha(x)$,
so for simplicity we chose not to formulate the results in full generality.
We hope to report on the general case in a later version.

Any triangulation of a marked surface in the sense of
Fomin, Shapiro and Thurston~\cite{FST08} gives rise to a triangulation
quiver (see Section~\ref{ssec:quiver})
and hence, by choosing multiplicities and scalars, to algebras
of quasi-quaternion type. Hence, as opposed to algebras of quaternion
type, there are algebras of quasi-quaternion type with arbitrarily
many non-isomorphic simple modules.

The triangulation quivers with small number of vertices can be
enumerated, see Figure~\ref{fig:quivers} for the quivers with at most
three vertices.
In particular, some algebras of quaternion type with 1, 2 or 3 vertices
arise from a monogon, a punctured monogon or a sphere with three
punctures, respectively, see Example~\ref{ex:triang}.
Some of the triangulation algebras arising from a punctured monogon or a
sphere with 3 punctures arise also from minimally elliptic curve
singularities, see Section~7 of~\cite{BIKR08}.

In general, the triangulation quiver constructed from a triangulation
differs from the adjacency quiver constructed in~\cite{FST08}.
However, for a triangulation of a closed surface satisfying
a technical condition called (T3) in our work~\cite{Ladkani12} these two
quivers coincide and the triangulation algebra (where all multiplicities
are set to~1)
coincides with the Jacobian algebra of the potential constructed
by Labardini~\cite{Labardini09}. Since any closed surface considered
in~\cite{Labardini09} admits such a triangulation and any other
triangulation can be obtained from it by a sequence of flips, by
using the facts that a flip of triangulations results in a mutation of
the corresponding quivers with potentials~\cite{Labardini09}
and that mutation of quivers with potentials is compatible with mutation
of cluster-tilting objects~\cite{BIRS11} we get the following result.

\begin{cor}
Consider a closed surface
which is not a sphere with less than $4$ punctures.
Then the Jacobian algebras arising from its triangulations are of
quasi-quaternion type and they are all derived equivalent to each other.
Moreover, they arise as algebras in part~\eqref{it:t:quasimut} of
Theorem~\ref{t:quasi} for a suitable triangulation quiver.
\end{cor}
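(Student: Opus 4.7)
The strategy is to reduce the general case to a single convenient triangulation for which the Jacobian algebra can be directly identified with a triangulation algebra covered by Theorem~\ref{t:quasi}, and then propagate the conclusion to every other triangulation via the flip~$\leftrightarrow$~mutation dictionary.

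First, for any closed surface $\Sigma$ that is not a sphere with fewer than $4$ punctures, I would appeal to~\cite{Ladkani12} to produce a triangulation $\tau$ of $\Sigma$ satisfying the technical condition (T3). As indicated in the paragraph preceding the corollary, such a triangulation has the crucial property that the adjacency quiver $Q_\tau$ of Fomin--Shapiro--Thurston coincides with the triangulation quiver $(Q,f)$ in the sense of Section~\ref{ssec:quiver}, with $f$ cyclically rotating the sides of each ideal triangle. Setting all multiplicities $m_\alpha = 1$ and choosing the scalars $c_\alpha$ so that the commutativity relations in~\eqref{e:qrel} recover Labardini's potential $W_\tau$, the triangulation algebra $\gL$ of Theorem~\ref{t:quasi} then equals the Jacobian algebra $J(Q_\tau, W_\tau)$. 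Applying Theorem~\ref{t:quasi} (or, for the sphere with exactly $4$ punctures, which corresponds to the exceptional tetrahedron of Figure~\ref{fig:except}, Proposition~\ref{p:quasiex} after verifying that Labardini's scalars meet the required non-triviality condition) gives both that $\gL$ is of quasi-quaternion type and that $\gL \simeq \End_\cC(T)$ for a 2-Calabi-Yau triangulated category $\cC$ and a cluster-tilting object $T \in \cC$.

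For an arbitrary triangulation $\sigma$ of $\Sigma$, use the classical fact that $\sigma$ is connected to $\tau$ by a finite sequence of flips. By Labardini's compatibility theorem~\cite{Labardini09}, each flip corresponds to a QP-mutation of the associated quiver with potential, so iterating exhibits $(Q_\sigma, W_\sigma)$ as an iterated QP-mutation of $(Q_\tau, W_\tau)$. Combined with the compatibility of QP-mutation with mutation of cluster-tilting objects~\cite{BIRS11}, this produces a cluster-tilting object $T_\sigma \in \cC$ reachable from $T$ by a sequence of mutations and with $\End_\cC(T_\sigma) \simeq J(Q_\sigma, W_\sigma)$. Part~\eqref{it:t:quasimut} of Theorem~\ref{t:quasi} then delivers simultaneously the quasi-quaternion property of $J(Q_\sigma, W_\sigma)$ and its derived equivalence with $\gL$, giving the conclusion of the corollary.

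The hard part is the initial identification at $\tau$: proving that the triangulation quiver of a (T3)-triangulation genuinely matches the FST adjacency quiver on the nose and that Labardini's scalar coefficients fit into the $c_\alpha$ framework of Theorem~\ref{t:quasi}. This is precisely the content of~\cite{Ladkani12} and so is assumed here. The restriction to surfaces other than spheres with fewer than $4$ punctures corresponds exactly to the range in which (T3)-triangulations exist; in the boundary tetrahedron case an additional verification that Labardini's natural potential sits in the non-degenerate regime of Proposition~\ref{p:quasiex} is required, which should follow from the specific signs appearing in Labardini's construction.
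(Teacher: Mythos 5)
Your proposal matches the paper's own argument essentially step for step: pick a (T3)-triangulation so that the adjacency quiver is a triangulation quiver and Labardini's Jacobian algebra is the triangulation algebra with all multiplicities~$1$, apply Theorem~\ref{t:quasi} (resp.\ Proposition~\ref{p:quasiex} with the scalar restriction for the $4$-punctured sphere), and then transport the conclusion to every other triangulation via flips~$=$~QP-mutations~$=$~mutations of cluster-tilting objects, invoking part~\eqref{it:t:quasimut}. The paper's proof is exactly this sketch (given in the paragraph preceding the corollary, together with the remark following it), so no further comparison is needed.
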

Note that for the proof of this result one does not need to know that the
potentials are non-degenerate. Note also that for a sphere with $4$ punctures
one has to use Proposition~\ref{p:quasiex} and impose the corresponding
restriction on the scalars.

\medskip

The proof of parts~\eqref{it:t:finite} and~\eqref{it:t:symmetric}
of Theorem~\ref{t:quasi} is
similar to our proofs in the case of the Jacobian algebras of the quivers
with potentials arising from triangulations of closed
surfaces~\cite{Ladkani12}. We note that in the presentation~\eqref{e:qrel}
it is enough to require only one zero relation
$\alpha \cdot f(\alpha) \cdot gf(\alpha)$, as the rest would follow from
that relation and the commutativity relations.

In~\cite{Holm99} Holm establishes the tameness of the algebras of quaternion
type by showing that some of them degenerate to algebras of dihedral type
and then applying a result of Geiss~\cite{Geiss95}.
Part~\eqref{it:t:tame} can be seen as a generalization of this statement
to arbitrary triangulation quivers. We note that connections between
Brauer graph algebras and cluster mutations have also been discovered by
Marsh and Schroll~\cite{MarshSchroll13}. For the two exceptional cases
considered in Proposition~\ref{p:quasiex}, the statement~$(\mathrm{c}'$)
holds since the corresponding triangulation algebras are of tubular
type~\cite{BS03}.

In part~\eqref{it:t:potential}
we use the notion of a hyperpotential introduced in~\cite{Ladkani14}
in order to formulate the results in a characteristic-free form.
In particular, we get that 2-blocks with quaternion defect group are
2-CY-tilted.
The triangulation algebra $\gL$ is a Jacobian algebra of a quiver with
potential as defined in~\cite{DWZ08} when the characteristic of the ground
field $K$ is zero or does not divide any of the multiplicities $m_\alpha$.
In that case a potential can be written as
\begin{equation} \label{e:potential}
\sum_{\alpha} \alpha \cdot f(\alpha) \cdot f^2(\alpha) -
\sum_{\beta} m_{\beta}^{-1} c_{\beta} \omega_{\beta}^{m_\beta}
\end{equation}
where the sums run over representatives of $f$-cycles and $g$-cycles,
respectively.

Parts~\eqref{it:t:quasi} and~\eqref{it:t:quasimut} are consequences of
parts~\eqref{it:t:symmetric}, \eqref{it:t:tame} and~\eqref{it:t:potential}.
In fact, the statements therein hold more generally
for any tame symmetric 2-CY-tilted algebra $\gL$. Part~\eqref{it:t:quasi}
follows from the next proposition which records some observations on
symmetric algebras that are also 2-CY-tilted.
\begin{prop}
Let $\gL$ be a finite-dimensional symmetric algebra that is also
2-CY-tilted, i.e.\ $\gL = \End_{\cC}(T)$ for some cluster-tilting object $T$
within a triangulated 2-Calabi-Yau category $\cC$ with suspension functor
$\Sigma$.
\begin{enumerate}
\renewcommand{\theenumi}{\alph{enumi}}
\item \label{it:p:omega4}
The functor $\Omega^4$ on the stable module category $\stmod \gL$
is isomorphic to the identity, hence all non-projective
$\gL$-modules are $\Omega$-periodic with period dividing $4$.

\item \label{it:p:sigma2}
The functor $\Sigma^2$ acts as the identity on the objects of $\cC$.

\item
Assume that $\gL$ is a Jacobian algebra of a hyperpotential.
Then it is rigid if and only if $\gL$ is semi-simple.
\end{enumerate}
\end{prop}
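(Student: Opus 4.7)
\emph{Strategy and part (b).} I would prove (b) first, deduce (a), and address (c) last via Hochschild cohomology. For (b): the 2-Calabi--Yau property gives that the Serre functor of $\cC$ is $\Sigma^2$, so $\mathrm{Hom}_\cC(X,Y) \simeq D\,\mathrm{Hom}_\cC(Y, \Sigma^2 X)$ bifunctorially. Specializing to $Y = T_i$ (an indecomposable summand of $T$) and $X = T$ gives $\mathrm{Hom}_\cC(T, \Sigma^2 T_i) \simeq D\,\mathrm{Hom}_\cC(T_i, T) = D(e_i \gL)$ as right $\gL$-modules. The symmetric hypothesis makes the Nakayama permutation trivial, so $D(e_i \gL) \simeq \gL e_i \simeq \mathrm{Hom}_\cC(T, T_i)$. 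Via the Keller--Reiten equivalence $\cC/[\Sigma T] \simeq \mathrm{mod}\,\gL$ we obtain morphisms $\phi_i, \psi_i$ between $T_i$ and $\Sigma^2 T_i$ whose compositions with identities differ by morphisms factoring through $\add \Sigma T$; the cluster-tilting vanishing $\mathrm{Ext}^1_\cC(T,T) = 0$ forces these error terms to vanish, so $\Sigma^2 T_i \simeq T_i$ in $\cC$. Because $\gL$ is symmetric its Nakayama automorphism is inner, hence one can choose the iso $\alpha : \Sigma^2 T \to T$ inducing the identity automorphism on $\End_\cC(T)$. For arbitrary $X$, take a triangle $T_1 \xrightarrow{f} T_0 \to X \to \Sigma T_1$ from cluster-tilting; applying $\Sigma^2$ and identifying the outer terms via $\alpha$ yields a triangle $T_1 \xrightarrow{f} T_0 \to \Sigma^2 X \to \Sigma T_1$ with the same connecting morphism, so $\Sigma^2 X \simeq X$.

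\emph{Part (a).} Since $\gL$ is symmetric it is self-injective, so $\stmod \gL$ is triangulated with shift $\Omega^{-1}$. The Keller--Reiten functor $F = \mathrm{Hom}_\cC(T, -)$ induces, after quotienting by projectives, a triangle functor intertwining $\Sigma_\cC$ with $\Omega^{-1}$, so that $\Omega^k F(X) \simeq F(\Sigma^{-k} X)$ in $\stmod \gL$. For $M \in \mathrm{mod}\,\gL$, choose $X \in \cC$ with $M \simeq F(X)$; then $\Omega^4 M \simeq F(\Sigma^{-4} X) \simeq F(X) = M$ using (b) applied twice.

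\emph{Part (c) and main obstacle.} The ``if'' direction is immediate. For the converse, a Jacobian algebra of a hyperpotential admits a canonical bimodule resolution of $\gL$ encoding its 3-Calabi--Yau bimodule structure and yielding Hochschild duality $\HH^i(\gL) \simeq D\HH^{3-i}(\gL)$. Rigidity (vanishing of the Hochschild class detecting hyperpotential deformations), combined with this duality and the bimodule analog of the periodicity from (a), forces $\HH^{i}(\gL) = 0$ for all $i \geq 1$; hence $\gL$ has bimodule projective dimension zero, so it is separable over $K$ and therefore semi-simple. The delicate step in (b) is choosing $\alpha$ compatibly across indecomposable summands so that the induced action on $\End_\cC(T)$ is trivial---this is where the symmetric hypothesis (trivial Nakayama automorphism) is essential. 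For (c), identifying rigidity with the correct Hochschild vanishing and lifting module periodicity to the bimodule setting is where the substantive work lies.
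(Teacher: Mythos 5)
The paper states this proposition without proof, so I am comparing your argument against the standard/intended one. Your part~(b) is essentially correct and is the expected argument: Serre duality gives $\operatorname{Hom}_{\cC}(T,\Sigma^2 T)\simeq D\gL$ as bimodules, symmetry identifies this with $\gL\simeq\operatorname{Hom}_{\cC}(T,T)$, the equivalence $\cC/(\Sigma T)\simeq\operatorname{mod}\gL$ lifts $F(\Sigma^2 T_i)\simeq F(T_i)$ to $\Sigma^2 T_i\simeq T_i$ (your ``error terms'' $T_i\to\Sigma T'\to T_i$ vanish because $\operatorname{Hom}(T,\Sigma T)=0$; alternatively, the split monomorphism into the indecomposable $\Sigma^2T_i$ must be an isomorphism), and choosing $\alpha\colon\Sigma^2T\to T$ inducing an inner automorphism suffices to transport the presenting triangle of an arbitrary $X$. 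In part~(a), however, the intertwining you assert, $\Omega^kF(X)\simeq F(\Sigma^{-k}X)$, is not correct as stated: applying $F$ to the rotated triangle $\Sigma^{-1}X\to T_1\to T_0\to X$ gives $\Omega F(X)\simeq\operatorname{coker}\bigl(F(\Sigma^{-1}X)\to F(T_1)\bigr)$, not $F(\Sigma^{-1}X)$. The correct compatibility is with the Auslander--Reiten translate: $\tau_{\cC}=\Sigma$ in a 2-CY category and the equivalence preserves AR structure, so $F(\Sigma X)\simeq\tau F(X)=\Omega^2\nu F(X)=\Omega^2F(X)$ for $\gL$ symmetric, whence $\Omega^4F(X)\simeq F(\Sigma^2X)\simeq F(X)$ by~(b). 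Better still, Keller--Reiten's theorem that the stable Cohen--Macaulay category of a 2-CY-tilted algebra is 3-Calabi--Yau gives, for self-injective $\gL$, that $\stmod\gL$ has Serre functor $\nu\Omega\simeq\Omega$, hence $\Omega\simeq\Omega^{-3}$ and $\Omega^4\simeq\mathrm{id}$ \emph{as functors} --- which is what the proposition claims and what your object-wise argument does not deliver.

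Part~(c) contains the genuine gap. Rigidity here means that $\HH_0(\gL)=\gL/[\gL,\gL]$ is spanned by the idempotents --- a condition on the degree-zero Hochschild \emph{homology} (the trace space), not a vanishing condition on higher Hochschild cohomology. Your reduction rests on two claims that fail: a finite-dimensional Jacobian algebra is not bimodule 3-Calabi--Yau (only the completed Ginzburg dg algebra is), so there is no duality $\HH^i(\gL)\simeq D\HH^{3-i}(\gL)$; and vanishing of $\HH^i(\gL)$ for all $i\ge1$ does not imply that $\gL$ is separable (there exist finite-dimensional algebras with $\HH^i=0$ in all large degrees and infinite global dimension, so ``Hochschild vanishing implies bimodule projective dimension zero'' is not available). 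The correct argument is elementary and uses only symmetry. Assume $\gL$ is basic, symmetric with symmetrizing form $\lambda$, and not semisimple. Then some $e_i\gL$ is non-simple, so $\operatorname{soc}(e_i\gL)\subseteq e_i(\operatorname{rad}\gL)e_i$ is a nonzero right ideal, and nondegeneracy of $\lambda$ yields $\omega\in\operatorname{soc}(e_i\gL)$ with $\lambda(\omega)\neq0$. If $\gL$ were rigid, write $\omega=u+\sum_jc_je_j$ with $u\in[\gL,\gL]$; reducing modulo $\operatorname{rad}\gL$ (where commutators vanish, since $\gL/\operatorname{rad}\gL$ is a product of copies of $K$) kills $\omega$ and $u$ and forces all $c_j=0$, so $\omega\in[\gL,\gL]\subseteq\ker\lambda$, a contradiction. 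The converse direction is immediate, as you say.
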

Here, by \emph{rigid} we mean that $\HH_0(\gL) = \gL/[\gL,\gL]$
is spanned by the images of the primitive idempotents corresponding to the
vertices. This definition is equivalent to the one in~\cite{DWZ08} for
finite-dimensional Jacobian algebras of quivers with potentials.
Parts~\eqref{it:p:omega4} and~\eqref{it:p:sigma2} of the proposition have
also been recently observed by Valdivieso-Diaz~\cite{Valdivieso13}.

The derived equivalences in part~\eqref{it:t:quasimut} are instances of
(refined version of) good mutations introduced in our previous
work~\cite{Ladkani10}. They follow from a more general statement
concerning the derived equivalences of neighboring 2-CY-tilted algebras
which is an improvement of~\cite[Theorem~5.3]{Ladkani10}.
Before stating the theorem, we recall some relevant notions.

Let $\gL$ be a basic algebra and $P$ an indecomposable projective
$\gL$-module and write $\gL = P \oplus Q$. Consider the silting
mutations in the sense of Aihara and Iyama~\cite{AiharaIyama12}
of $\gL$ at $P$ within the triangulated category $\per \gL$
of perfect complexes, which are the following two-term complexes
\begin{align*}
U^-_P(\gL) = (P \to Q') \oplus Q &,&
U^+_P(\gL) = (Q'' \to P) \oplus Q ,
\end{align*}
where $Q', Q'' \in \add Q$, the maps are left/right
$(\add Q)$-approximations and $Q, Q', Q''$ are in degree 0.
These two-term complexes of projective modules are known also as
Okuyama-Rickard complexes. In~\cite{Ladkani10} we considered these
complexes in relation with our definition of mutations of algebras.

\begin{theorem} \label{t:dereq}
Let $T$ be a cluster-tilting object in a 2-Calabi-Yau category $\cC$, 
let $X$ be an indecomposable summand of $T$ and let $T'$ be the
cluster-tilting object which is the
Iyama-Yoshino mutation~\cite{IyamaYoshino08} of $T$ at $X$.

Consider the 2-CY-tilted algebras $\gL = \End_{\cC}(T)$ and
$\gL'= \End_{\cC}(T')$.
Let $P$ be the indecomposable projective $\gL$-module corresponding to $X$
and let $P'$ be the indecomposable projective $\gL'$-module
corresponding to $X$.
\begin{enumerate}
\renewcommand{\theenumi}{\alph{enumi}}
\item
If $U^-_P(\gL)$ and $U^+_{P'}(\gL')$ are tilting complexes, then
\[
\End_{\per \gL} U^-_P(\gL) \simeq \gL' \qquad \text{and} \qquad
\End_{\per \gL'} U^+_{P'}(\gL') \simeq \gL .
\]

\item
If $U^+_P(\gL)$ and $U^-_{P'}(\gL')$ are tilting complexes, then
\[
\End_{\per \gL} U^+_P(\gL) \simeq \gL' \qquad \text{and} \qquad
\End_{\per \gL'} U^-_{P'}(\gL') \simeq \gL .
\]

\item \label{it:algmut}
If $\gL$ is weakly symmetric, then by~\cite{HerschendIyama11}
$\gL'$ is also weakly symmetric, hence all
the complexes $U^-_P(\gL)$, $U^+_P(\gL)$, $U^-_{P'}(\gL')$
and $U^+_{P'}(\gL')$ are tilting complexes and
\[
\End_{\per \gL} U^-_P(\gL) \simeq \gL' \simeq
\End_{\per \gL} U^+_P(\gL) .
\]
In particular, $\gL$ and $\gL'$ are derived equivalent.

\item
If $\gL$ is symmetric then $\gL'$ is symmetric.
\end{enumerate}
\end{theorem}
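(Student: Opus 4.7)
The plan is to interpret the Iyama-Yoshino mutation in $\cC$ on the algebra side as an Okuyama-Rickard silting mutation of $\gL$ in $\per \gL$. The main tool is the functor $F = \mathrm{Hom}_{\cC}(T, -) \colon \cC \to \mod \gL$, which sends $X$ to $P$ and identifies $\End_{\cC}(T/X)$ with the corner algebra $e_Q \gL e_Q$, where $e_Q$ is the idempotent complementary to the one picking out $P$. The argument refines~\cite[Theorem~5.3]{Ladkani10}; the essential improvement is obtaining, in the weakly symmetric case, a derived equivalence from both one-sided Okuyama-Rickard complexes simultaneously, which is what is needed for the applications to parts~\eqref{it:t:quasi} and~\eqref{it:t:quasimut} of Theorem~\ref{t:quasi}.

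For parts (a) and (b) I would start from the two exchange triangles
\[
X^* \to B \xrightarrow{u} X \to \Sigma X^*, \qquad X \xrightarrow{v} B' \to X^* \to \Sigma X
\]
in $\cC$, where $u$ is a minimal right $\add(T/X)$-approximation and $v$ a minimal left one. Applying $F$ and using the vanishing $\mathrm{Hom}_{\cC}(T/X, \Sigma X^*) = 0$ (which holds because $T/X$ is a common summand of the cluster-tilting objects $T$ and $T'$), one shows that $F(u)\colon F(B)\to P$ is a right $(\add Q)$-approximation of $P$ and that $F(v)\colon P \to F(B')$ is a left one. Hence, up to the stationary summand $Q$, these two-term complexes realize $U^+_P(\gL)$ and $U^-_P(\gL)$. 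To identify the endomorphism algebras with $\gL'$ I would compute $\mathrm{Hom}_{\per \gL}$-spaces between the summands of, say, $U^-_P(\gL)$: the diagonal block coming from the two-term complex is identified with $\End_{\cC}(X^*)$ by using the 2-Calabi-Yau duality $\mathrm{Hom}_{\cC}(X, \Sigma X^*) \simeq D\mathrm{Hom}_{\cC}(X^*, \Sigma X)$ to account for the connecting contributions in $\per \gL$, while the off-diagonal blocks match $\mathrm{Hom}_{\cC}(T/X, X^*)$ and $\mathrm{Hom}_{\cC}(X^*, T/X)$. Swapping the roles of $T$ and $T'$ yields the second isomorphism in each part.

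Part (c) is then short. By Herschend-Iyama~\cite{HerschendIyama11} mutation preserves weak symmetry, so $\gL'$ is weakly symmetric. Under weak symmetry the top and socle of every indecomposable projective agree as simple modules, so each minimal one-sided $(\add Q)$-approximation of $P$ is automatically both left and right minimal; this upgrades $U^+_P(\gL)$ and $U^-_P(\gL)$ from merely silting to genuine tilting complexes. With both conditions in place, parts (a) and (b) apply and give the two identifications in (c), and the derived equivalence follows. Part (d) is then immediate: (c) provides a derived equivalence $\gL \simeq \gL'$, and Rickard's theorem ensures that being symmetric is preserved under derived equivalence.

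The main obstacle is expected to be the identification in parts (a) and (b) of $\End_{\per \gL} U^\pm_P(\gL)$ with $\gL'$ in full generality, without imposing additional hypotheses beyond the tilting assumption. The delicate point is matching the connecting morphism $X \to \Sigma X^*$ of the exchange triangle with the relations governing $\gL'$; handling this requires a careful use of the 2-Calabi-Yau pairing together with the duality $D\mathrm{Hom}_{\cC}(X^*, \Sigma T) \simeq \mathrm{Hom}_{\cC}(T, \Sigma X^*)$ in order to compute all the relevant Hom-spaces in $\per \gL$ purely in terms of $\cC$. This is precisely the step where the refinement over~\cite[Theorem~5.3]{Ladkani10} enters.
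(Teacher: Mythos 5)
Your outline does follow the route the paper indicates (the paper itself contains no detailed proof of this theorem; it only describes it as a refinement of~\cite[Theorem~5.3]{Ladkani10}): exchange triangles pushed through $\operatorname{Hom}_{\cC}(T,-)$ to realize the Okuyama--Rickard complexes, Herschend--Iyama for weak symmetry of $\gL'$, and Rickard's theorem for part~(d). But two of your steps have genuine gaps. In part~(c), your reason why weak symmetry upgrades $U^{\pm}_P(\gL)$ from silting to tilting does not work: right minimality of a nonzero map out of the indecomposable $P$ is automatic and irrelevant, and ``left and right minimal'' is not the condition that needs to be verified. What must be shown is $\operatorname{Hom}(U,U[-1])=0$, which for $U=U^-_P(\gL)=(P\xrightarrow{\;\lambda\;}Q')\oplus Q$ amounts to: every $\psi\colon Q''\to P$ with $Q''\in\add Q$ and $\lambda\psi=0$ vanishes. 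Weak symmetry enters through the Nakayama functor $\nu$ fixing each indecomposable projective: Serre duality $\operatorname{Hom}(A,B)\simeq D\operatorname{Hom}(B,\nu A)$ identifies $\lambda_*\colon\operatorname{Hom}(Q'',P)\to\operatorname{Hom}(Q'',Q')$ with the dual of $\lambda^*\colon\operatorname{Hom}(Q',\nu Q'')\to\operatorname{Hom}(P,\nu Q'')$, which is surjective because $\nu Q''\in\add Q$ and $\lambda$ is a left $\add Q$-approximation; hence $\lambda_*$ is injective. Your ``top equals socle'' observation is the definition of weak symmetry restated, not a derivation of this vanishing.

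For parts~(a) and~(b), your proposed computation of $\End_{\per\gL}U^-_P(\gL)$ uses only the exchange triangle and 2-CY duality on the $\gL$-side, so if it succeeded it would prove the identification assuming only that $U^-_P(\gL)$ is tilting --- a strictly stronger statement than the one claimed. The two conclusions in~(a) are symmetric under swapping $(T,X)\leftrightarrow(T',X^*)$, so the theorem would otherwise have been stated one-sidedly; the reason it is not is that the one-sided computation does not close up. The functor $\operatorname{Hom}_{\cC}(T,-)$ kills morphisms factoring through $\add\Sigma T$, and the natural comparison map between $\gL'=\End_{\cC}(T')$ and $\End_{\per\gL}U^-_P(\gL)$ is not obviously bijective from the $\cC$-picture alone; this is exactly why~\cite{Ladkani10} had to impose the endomorphism-algebra condition separately in the definition of a good mutation. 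A complete argument must say where the hypothesis that $U^+_{P'}(\gL')$ is tilting is used --- the expected mechanism is to produce algebra homomorphisms in both directions, one from each of the two tilting complexes, and to show they are inverse isomorphisms, e.g.\ by comparing the two induced derived equivalences or their Cartan data. As written, your sketch names the delicate point but does not supply the idea that resolves it.
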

We note that there are related works by Dugas~\cite{Dugas10}
concerning derived equivalences of symmetric algebras and by
Mizuno~\cite{Mizuno12} concerning derived equivalences of
self-injective quivers with potential.

The category of perfect complexes over a symmetric algebra is 0-Calabi-Yau,
hence the derived equivalences in part~\eqref{it:algmut} can be
considered as 0-CY analogs of the derived equivalences of
Iyama-Reiten~\cite{IyamaReiten08} and Keller-Yang~\cite{KellerYang11}
for 3-CY-algebras.

Rephrasing part~\eqref{it:algmut}, we see that if $\gL$ is a
(weakly) symmetric 2-CY-tilted algebra and $P$ an indecomposable
projective $\gL$-module, then the algebras $\End_{\per \gL} U^-_P(\gL)$
and $\End_{\per \gL} U^+_P(\gL)$ are isomorphic to each other,
2-CY-tilted and derived equivalent to~$\gL$.
A careful look at the derived equivalences constructed by
Holm~\cite{Holm99} for
algebras of quaternion type shows that all of them arise from
tilting complexes of the form appearing in part~\eqref{it:algmut}
above. Since the representatives of the derived classes are
triangulation algebras and hence 2-CY-tilted, we deduce that all the
algebras of quaternion type are of the form given in
Theorem~\ref{t:quasi}\eqref{it:t:quasimut} and in particular they are
2-CY-tilted.

Many of the algebras occurring in part~\eqref{it:t:quasimut} of
Theorem~\ref{t:quasi} are themselves triangulation algebras.
In fact, one can define a notion of mutation of triangulation quivers
that will lead to mutation of the potentials~\eqref{e:potential},
see Section~\ref{ssec:mut}.

Finally, we note that an argument as in Prop.~2.1 and Prop.~2.2
of~\cite{Holm99} yields the following observation.
\begin{prop}
\sloppy
Any algebra which is derived equivalent to an algebra of quasi-quaternion
type is also of quasi-quaternion type.
\end{prop}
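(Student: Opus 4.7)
The plan is to verify that each of the four defining properties of an algebra of quasi-quaternion type—tame, symmetric, indecomposable, and having every non-projective module $\Omega$-periodic with period dividing~$4$—is preserved under derived equivalence, following the pattern of Holm's argument in Propositions~2.1 and~2.2 of~\cite{Holm99}.

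Preservation of the symmetric property is Rickard's classical theorem that the class of symmetric algebras is closed under derived equivalence. Indecomposability (having a single block) is preserved because a derived equivalence induces a ring isomorphism of centers, and the block decomposition of $\gL$ is determined by the decomposition of $Z(\gL)$ into indecomposable factors. Preservation of tameness is the theorem of Krause stating that representation type is a derived invariant of finite-dimensional algebras over an algebraically closed field.

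The remaining condition is the most substantial. For a symmetric algebra $\gL$ the stable module category $\stmod\gL$ is triangle equivalent to the singularity category $D^b(\mathrm{mod}\,\gL)/\per\gL$, and under this equivalence the suspension corresponds to $\Omega^{-1}$. A derived equivalence $D^b(\mathrm{mod}\,\gL) \simeq D^b(\mathrm{mod}\,\gL')$ restricts to an equivalence on the subcategories of perfect complexes and therefore descends to a triangle equivalence of the Verdier quotients. The property that $\Omega^4 M \simeq M$ for every object $M$ is a statement about isomorphism classes of objects in a triangulated category and is preserved by any triangle equivalence, so it transfers from $\stmod\gL$ to $\stmod\gL'$.

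The main obstacle is the tameness step, which genuinely relies on Krause's deep theorem; the other three steps are essentially formal once the correct invariants and triangle equivalences are identified. A minor point to confirm along the way is that the identification of $\stmod\gL$ with the singularity category is functorial enough that the $\Omega$-periodicity of \emph{every} non-projective module (not only the simple ones) transfers under the induced equivalence, which follows from its character as an equivalence of triangulated categories.
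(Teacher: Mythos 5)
Your proposal is correct and follows essentially the same route as the paper, which simply invokes the argument of Propositions~2.1 and~2.2 of Holm's paper: symmetry and indecomposability are derived invariants, tameness is preserved via the induced stable equivalence of Morita type together with Krause's theorem, and the $\Omega$-periodicity condition transfers through the triangle equivalence of stable module categories. The only cosmetic point is that Krause's theorem is about stable (not derived) equivalences preserving representation type, but since you have already produced the stable equivalence from Rickard's theorem for self-injective algebras, the argument goes through unchanged.
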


\subsection*{Acknowledgements}
Some of the results reported here were obtained during
my stay at the University of Bonn and were scheduled to be presented
at the ARTA conference that was held in September 2013 at Torun, Poland.
During that period the author was supported by DFG grant LA 2732/1-1 in
the framework of the priority program SPP 1388 ``Representation theory''.

The report was written during my visit to IHES at Bures-sur-Yvette.
I would like to thank the IHES for the hospitality and the inspiring
atmosphere.
This report has been completed within the last days of my long pleasant
postdoctoral stay in Europe for over 6 years. I hope to find some time in the
future to produce a more detailed version.

I discussed various aspects of this work with Thorsten Holm, Maxim Kontsevich,
Robert Marsh and Andrzej Skowronski. I thank them for their interest.

\section{Combinatorial construction of algebras of quasi-quaternion type}
\label{sec:comb}

\subsection{Ribbon quivers and triangulation quivers}
\label{ssec:quiver}

A \emph{quiver} is a finite directed graph. More precisely, it is a quadruple
$Q=(Q_0,Q_1,s,t)$ where $Q_0$ and $Q_1$ are finite sets (of \emph{vertices}
and \emph{arrows}, respectively) and $s,t \colon Q_1 \to Q_0$ are functions
specifying for each arrow its starting and terminating
vertex, respectively.

\begin{defn}
A \emph{ribbon quiver} is a pair $(Q,f)$ consisting of a quiver
$Q$ and a permutation $f \colon Q_1 \to Q_1$ on its set of arrows
satisfying the following conditions:
\begin{enumerate}
\renewcommand{\theenumi}{\roman{enumi}}
\item \label{it:ribbon:deg2}
At each vertex $i \in Q_0$ there are exactly two arrows starting at $i$ and
two arrows ending at $i$;

\item \label{it:ribbon:f}
For each arrow $\alpha \in Q_1$, the arrow $f(\alpha)$ starts where $\alpha$
ends.
\end{enumerate}
Note that loops are allowed in $Q$. A loop at a vertex is counted both
as an incoming and outgoing arrow at that vertex.
\end{defn}

Let $(Q,f)$ be a ribbon quiver.
Since at each vertex of $Q$ there are exactly two outgoing arrows,
there is an involution $\alpha \mapsto \balpha$ on $Q_1$ mapping
each arrow $\alpha$ to the other arrow starting at the vertex $s(\alpha)$.
Composing it with $f$ gives rise to the permutation
$g \colon Q_1 \to Q_1$ given by $g(\alpha) = \overline{f(\alpha)}$
so that for each arrow $\alpha$,
the set $\{f(\alpha),g(\alpha)\}$ consists of
the two arrows starting at the vertex which $\alpha$ ends at.

Given a quiver $Q$ satisfying condition~\eqref{it:ribbon:deg2} in the
definition, the data of the permutation $f$ is equivalent to the data of the
permutation $g$. Thus from now on when considering a ribbon quiver $(Q,f)$
we will freely refer to the involution $\alpha \mapsto \balpha$ and
the permutation $g$ as defined above.

Ribbon quivers are closely related to ribbon graphs.
Informally speaking, a ribbon graph is a graph consisting of nodes and
edges together with a cyclic ordering of the edges around each node.
This can be made more formal in the next definition.

\begin{defn}
A \emph{ribbon graph} is a triple $(H, \iota, \sigma)$ where $H$ is a finite
set, $\iota$ is an involution on $H$ without fixed points and
$\sigma$ is a permutation on $H$.
\end{defn}

The elements of $H$ are called \emph{half edges}.
A ribbon graph gives rise to a graph $(V,E)$ (possibly with loops and
multiple edges between nodes) as follows.
The set $E$ of edges consists of the cycles of $\iota$ and
the set $V$ of nodes consists of the cycles of $\sigma$.
An edge $e \in E$ can be written as $(h \, \iota(h))$ for some $h \in H$.
The $\sigma$-cycles that $h$ and $\iota(h)$ belong to are the nodes that
$e$ is incident to.
Finally, the cyclic ordering around each node is induced by $\sigma$.

\begin{prop}
The notions of ribbon quiver and ribbon graph are equivalent.
\end{prop}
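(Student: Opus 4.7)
The plan is to exhibit mutually inverse constructions between the two kinds of data. Given a ribbon quiver $(Q,f)$, set $H = Q_1$, take $\iota$ to be the bar involution $\alpha \mapsto \balpha$ pairing the two arrows starting at a common vertex, and set $\sigma = g$. In the opposite direction, given a ribbon graph $(H,\iota,\sigma)$, build a quiver with $Q_0$ the set of $\iota$-cycles and $Q_1 = H$, letting $s(h)$ be the $\iota$-cycle containing $h$, letting $t(h)$ be the $\iota$-cycle containing $\sigma(h)$, and defining $f = \iota \circ \sigma$ (which is consistent with the formula $g = \iota \circ f$ recorded in Section~\ref{ssec:quiver}).

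The next step is to verify that both constructions produce an object of the intended type. For the first, $\iota$ is clearly an involution, and it has no fixed points because each vertex of $Q$ has two distinct outgoing arrows (a loop contributes only one outgoing arrow, hence the other outgoing arrow at such a vertex must be a different element of $Q_1$); as $\sigma = g$ is a permutation on $H$, we get a ribbon graph. For the second, the $\iota$-cycle $\{h,\iota(h)\}$ is by construction the set of outgoing arrows at the corresponding vertex, giving exactly two outgoing arrows, while $\sigma^{-1}(h)$ and $\sigma^{-1}(\iota(h))$ supply the two incoming arrows. The map $f = \iota \circ \sigma$ is a permutation, and $s(f(h)) = s(\iota(\sigma(h)))$ is the $\iota$-cycle of $\sigma(h)$, which equals $t(h)$, so the ribbon quiver axiom holds.

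Finally I would check that the two constructions are mutually inverse. Starting from a ribbon quiver $(Q,f)$, the vertex set is recovered from the $\iota$-cycles (each vertex is determined by its pair of outgoing arrows) and the permutation is recovered via $\iota \circ g = \iota \circ \iota \circ f = f$. Conversely, starting from $(H,\iota,\sigma)$ the bar involution at the vertex $\{h,\iota(h)\}$ of the reconstructed quiver sends $h$ to $\iota(h)$, so it equals $\iota$, and then $g = \iota \circ f = \iota \circ \iota \circ \sigma = \sigma$. There is no real obstacle here: the whole argument is a routine identification of the two packages of combinatorial data once the dictionary $H \leftrightarrow Q_1$, $\iota \leftrightarrow \bar{\phantom{x}}$, $\sigma \leftrightarrow g$ is fixed, and the only mild subtlety is the convention on loops needed to ensure that $\balpha \neq \alpha$.
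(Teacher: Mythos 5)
Your proposal is correct and follows essentially the same route as the paper: the same dictionary $H \leftrightarrow Q_1$, $\iota \leftrightarrow \bar{\phantom{x}}$, $\sigma \leftrightarrow g$ (equivalently $f = \iota\sigma$), with the two constructions checked to be mutually inverse. You simply spell out the routine verifications (fixed-point-freeness of $\iota$ via the two-outgoing-arrows condition and the loop convention, and the ribbon quiver axioms for the reconstructed quiver) that the paper leaves implicit.
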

\begin{proof}
A ribbon quiver $(Q,f)$ gives rise to a ribbon graph
$(H,\iota,\sigma)$ by taking $H=Q_1$ and defining $\iota(\alpha)=\balpha$
and $\sigma(\alpha) = \overline{f(\alpha)}$ for each $\alpha \in Q_1$.

Conversely, a ribbon graph $(H,\iota,\sigma)$ gives rise to a ribbon quiver
$(Q,f)$ as follows.
Set $Q_1=H$ and take $Q_0$ to be the set of cycles of $\iota$. Define
the maps $s,t \colon Q_1 \to Q_0$ and the permutation $f \colon Q_1 \to Q_1$
by letting, for $h \in H$, $s(h)$ to be the $\iota$-cycle that $h$ belongs to
and setting $t = s \sigma$ and $f = \iota \sigma$.

We finally note that these two constructions are inverses of each other.
\end{proof}

We will focus on a subclass of ribbon quivers formed by what we call
triangulation quivers. 

\begin{defn}
A \emph{triangulation quiver} is a ribbon quiver $(Q,f)$ such that
$f^3$ is the identity on the set of arrows.
\end{defn}

As their name suggests, triangulation quivers naturally arise from
triangulations of marked surfaces.
Following Fomin, Shapiro and Thurston~\cite{FST08}, 
a \emph{marked surface} is a pair $(S,M)$ consisting of a compact,
connected, oriented, Riemann surface $S$ (possibly with boundary)
and a finite set $M$ of points in $S$, called \emph{marked points},
such that each connected component of the boundary
of $S$ contains at least one point from $M$. The points in $M$
which are not on the boundary of $S$ are called \emph{punctures}.

We refer to~\cite{FST08} for the notion of (ideal) \emph{triangulation}
of a marked surface.

\begin{prop}
A triangulation of a marked surface gives rise to a ribbon graph
whose associated ribbon quiver is a triangulation quiver.
\end{prop}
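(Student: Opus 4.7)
My plan is to build the ribbon graph explicitly from the triangulation $T$ of $(S,M)$ and then to verify the condition $f^3 = \mathrm{id}$ by classifying the $f$-orbits geometrically. Concretely, I take as half-edges the pairs $(a,p)$ where $a$ is an arc of $T$ (with the convention that the boundary segments between consecutive marked points are also counted as arcs) and $p$ is an endpoint of $a$, a loop at a marked point contributing two distinct half-edges there, one for each side of the loop. The involution $\iota$ swaps the two half-edges coming from each arc, and is plainly fixed-point-free. The permutation $\sigma$ is the cyclic-successor map with respect to the following orderings induced by the orientation of $S$: around each puncture, the incident half-edges are ordered counterclockwise; around each boundary component, one traverses the boundary counterclockwise, and at each marked point $p$ on it one visits in turn the arriving boundary arc, the interior arcs at $p$ in counterclockwise order, and the leaving boundary arc. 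Then $(H,\iota,\sigma)$ is a ribbon graph, and by the previous proposition it determines a ribbon quiver $(Q,f)$ with $f = \iota\sigma$.

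To verify that this $(Q,f)$ is a triangulation quiver, I would show that every $f$-orbit on $H$ has length $1$ or $3$. Start from a half-edge $(a,p)$ and write $\sigma(a,p) = (b,p')$. By the description of $\sigma$ above, either (i) $p' = p$, and then $a, b$ are two sides of a unique triangle $\Delta$ of $T$ meeting at $p$; or (ii) $p' \neq p$, in which case $a = b$ is a boundary arc traversed from $p$ to its other endpoint. In case (i), let $q$ be the other endpoint of $b$, let $c$ be the third side of $\Delta$ and $r$ its remaining vertex; then $f(a,p) = \iota(b,p) = (b,q)$, consistency of the surface orientation ensures that $\sigma(b,q) = (c,q)$, so $f(b,q) = (c,r)$, and by the same argument $f(c,r) = (a,p)$. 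This orbit is a $3$-cycle tracing the three interior angles of $\Delta$. In case (ii), $f(a,p) = \iota(a,p') = (a,p)$ is a fixed point of $f$, corresponding to the outer side of a boundary arc. In both cases $f^3(a,p) = (a,p)$, so $f^3 = \mathrm{id}$ and $(Q,f)$ is a triangulation quiver.

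The main technical point to be checked is that this classification of $f$-orbits remains valid in the degenerate configurations permitted by the FST framework, most notably for self-folded triangles in which two vertices coincide and the folded radius contributes two half-edges at the same marked point. In each such case one must unfold the local cyclic order at that marked point and verify that it places the half-edges of the folded arc in the positions required to still produce the expected mix of interior $3$-cycles and boundary $1$-cycles; no other orbit lengths arise. Once the local analysis at every marked point is consistent with $\sigma$ sending each interior-type half-edge to the next interior angle of its own triangle, the argument above goes through unchanged and yields a triangulation quiver.
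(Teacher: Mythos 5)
Your proposal is correct and follows essentially the same route as the paper: the paper likewise takes punctures and boundary components as nodes, arcs and boundary segments as edges, the counterclockwise order from the orientation as $\sigma$, and then identifies the $f$-orbits with the three interior angles of each triangle (giving $3$-cycles) and with boundary segments (giving fixed points $f(\delta)=\delta$). You are merely more explicit about the half-edge bookkeeping, and the self-folded-triangle check you defer is handled just as implicitly in the paper (its triangles are allowed to have repeated sides, which produces the loop at the inner side of a self-folded triangle).
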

\begin{proof}
Consider a triangulation $\tau$ of a marked surface $(S,M)$.
We associate to $\tau$ a ribbon graph as follows:
the nodes are the punctures in $M$ and the connected components of the
boundary of $S$, and the edges are the arcs of $\tau$ as well as the boundary
segments (sides of triangles which are part of the boundary).

For each boundary segment on a boundary component $C$
we draw the corresponding edge as a loop incident to the node 
corresponding to $C$.
In this way each marked point $p$ on $C$ could be identified with the
``space'' between the consecutive loops corresponding to the two boundary
segments which have $p$ as endpoint, see an example in Figure~\ref{fig:node}.

Using this identification, we can now draw the edges corresponding to arcs,
placing them correctly between the loops (if an endpoint of the arc is on 
a boundary).
The cyclic ordering at each node is the counterclockwise ordering induced by
the orientation of $S$.

\begin{figure}
\[
\xymatrix{
{\circ} \ar@{-}@(ur,ul)[] \ar@{-}@(ul,dl)[]
\ar@{-}@(dl,dr)[] \ar@{-}@(dr,ur)[]
}
\]
\caption{A node with $4$ loops corresponds to a boundary component with
$4$ marked points.}
\label{fig:node}
\end{figure}
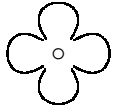

The vertices of the corresponding ribbon quiver are the arcs of $\tau$
as well as the boundary segments.
At each vertex corresponding to a boundary segment where is a loop
$\delta$ with $f(\delta)=\delta$, and each triangle in $\tau$ 
with sides $v_1, v_2, v_3$ (which may be arcs or
boundary segments) arranged in a clockwise order gives rise to
three arrows
$v_1 \xrightarrow{\alpha} v_2$, $v_2 \xrightarrow{\beta} v_3$
and $v_3 \xrightarrow{\gamma} v_1$ with
$f(\alpha)=\beta$, $f(\beta)=\gamma$ and $f(\gamma)=\alpha$.
\end{proof}

The construction of the triangulation quiver of an ideal triangulation
resembles that of the adjacency quiver defined in~\cite{FST08},
however there are several differences:
\begin{enumerate}
\renewcommand{\labelenumi}{\theenumi.}
\item
In the triangulation quiver there are vertices corresponding to the
boundary segments and not only to the arcs.

\item
Our treatment of self-folded triangles is different; in the triangulation
quiver there is a loop at each vertex corresponding to the inner side
of a self-folded triangle.

\item
We do not delete 2-cycles that arise in the quiver (e.g.\ when there
are precisely two arcs incident to a puncture).
\end{enumerate}

These differences allow to attach triangulation quivers to 
marked surfaces that do not admit adjacency quivers, such as
a monogon, a triangle or a sphere with three punctures, see
Figure~\ref{fig:quivers}. On the other hand, there are situations where
the triangulation quiver and the adjacency quiver of a triangulation
coincide.

\begin{lemma} \label{l:adjacency}
The triangulation quiver equals the adjacency quiver for any
triangulation of a closed surface (i.e.\ with empty boundary)
with at least three arcs incident to each puncture.
\end{lemma}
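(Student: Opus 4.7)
The plan is to compare the two quiver constructions directly and show that all sources of discrepancy listed after the definition of the triangulation quiver vanish under the hypothesis. Since the surface is closed, there are no boundary segments, so the vertex sets of both quivers coincide with the set of arcs of the triangulation $\tau$. The remaining differences concern (i) the treatment of self-folded triangles and (ii) the deletion of $2$-cycles in the adjacency quiver. The strategy is to show that under the degree hypothesis neither situation can occur.

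First I would verify that no self-folded triangle exists in $\tau$. The inner puncture of a self-folded triangle has exactly one incident arc, namely the folded side, and this immediately contradicts the assumption that every puncture has at least three incident arcs.

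The main step, and the one I expect to be the technical heart of the argument, is to show that no $2$-cycles are produced in the adjacency quiver. A $2$-cycle would arise from a pair of distinct triangles $T_1, T_2$ sharing two sides $a, b$. Fixing a common endpoint $p$ of $a$ and $b$ and analyzing the local picture around $p$, one sees that the arcs incident to $p$ partition a small disc neighborhood of $p$ into angular sectors, each filled by a single triangle of $\tau$; since the two sectors bounded by the pair $\{a,b\}$ are already filled by $T_1$ and $T_2$, they exhaust the full neighborhood of $p$. Hence $a$ and $b$ are the only arcs incident to $p$, giving $p$ degree $2$, which contradicts the hypothesis.

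With self-folded triangles and $2$-cycles both ruled out, each triangle of $\tau$ is non-degenerate and contributes, in both constructions, exactly the same three arrows: for sides $v_1, v_2, v_3$ arranged in clockwise order, the arrows $v_1 \to v_2$, $v_2 \to v_3$, $v_3 \to v_1$. The two quivers therefore have identical vertex and arrow sets, and they coincide.
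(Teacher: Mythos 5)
Your reduction to the three listed discrepancies (boundary segments, self-folded triangles, $2$-cycles) is the right framing, and the first two steps are fine: a closed surface has no boundary segments, and the puncture enclosed by a self-folded triangle has only one incident arc. (The paper itself gives no proof here --- it defers to condition (T3) of \cite{Ladkani12} --- so there is nothing to compare beyond this framing.) The problem is the step you correctly identify as the technical heart. Your argument purports to show that no two distinct triangles of $\tau$ can share two sides. That statement is false under the hypothesis of the lemma, and the once-punctured torus --- one of the basic examples in Figure~\ref{fig:quivers} --- is a counterexample: its two triangles share all three arcs, yet the unique puncture has three incident arcs. Running your sector argument there, the two sectors at $p$ bounded by the pair $\{a,b\}$ do \emph{not} exhaust the neighborhood of $p$ (there are six sectors), so the conclusion that $p$ has degree $2$ does not follow. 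The hidden assumptions in your local picture are that $a$ and $b$ each contribute exactly one end at $p$ and that \emph{both} triangles have their $a$--$b$ corner at $p$; these hold only when $a,b$ are non-loops sharing exactly one endpoint, which is the only case your argument actually covers (and there it does give the desired contradiction).

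The deeper issue is that ``two triangles share two sides'' is not equivalent to ``the adjacency quiver has a cancelling $2$-cycle.'' Two triangles $T_1\neq T_2$ with common sides $a,b$ contribute two arrows between the vertices $a$ and $b$, but these may point in the \emph{same} direction (a double arrow, as in the Markov quiver, where nothing is deleted) or in \emph{opposite} directions (a genuine $2$-cycle, which the adjacency quiver cancels). Only the latter must be excluded, and distinguishing the two requires an orientation argument, not a degree count: one has to examine the two corners between $a$ and $b$, note that each is a sector bounded by consecutive ends of $a$ and $b$ around a puncture, and show that if the induced cyclic orders are opposite then in fact those two ends are the \emph{only} arc-ends at that puncture (degree $2$), or one of $a,b$ is the folded side of a self-folded triangle. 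For instance, when $a,b$ both join punctures $P\neq Q$ and $T_1$ has its $a$--$b$ corner at $P$ while $T_2$ has its corner at $Q$ (so that the third sides are loops at $Q$ and $P$ respectively), the two arrows point the same way and no puncture need have small degree --- a configuration your argument would wrongly declare impossible. So the proof needs to be reorganized around orientations of shared corners rather than around forbidding shared side-pairs.
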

The condition in the lemma was called (T3) in our work~\cite{Ladkani12}.
In particular, we get the following corollary.

\begin{cor} \label{c:onep}
For a closed surface with exactly one puncture, the triangulation quiver
and the adjacency quiver associated to any triangulation coincide.
\end{cor}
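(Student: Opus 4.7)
The plan is to derive the corollary directly from Lemma~\ref{l:adjacency} by verifying that its hypothesis (condition (T3)) is automatically satisfied for any triangulation of a closed surface with a single puncture. Since all arcs must have their endpoints among the marked points and there is only one marked point, every arc is a loop at that puncture, so it suffices to show that any triangulation has at least three arcs (or rather, that the puncture has valency at least three, counting each loop twice).

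First I would apply Euler's formula. Let $g$ denote the genus of the closed surface $S$ and consider an ideal triangulation $\tau$ with $V$ vertices, $E$ arcs, and $F$ triangles. Then $V = 1$ and $V - E + F = 2 - 2g$. Since each triangle has three sides and each arc borders exactly two triangles (the boundary is empty), we get $3F = 2E$, hence $F = 2E/3$. Substituting yields
\[
1 - E + \tfrac{2E}{3} = 2 - 2g, \qquad \text{so} \qquad E = 6g - 3.
\]
In particular, the existence of a triangulation forces $g \geq 1$ (a sphere with a single puncture admits no ideal triangulation), and then $E = 6g - 3 \geq 3$.

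Next I would count arc-endpoints at the puncture: every arc is a loop and therefore contributes two endpoints, giving $2E = 12g - 6 \geq 6$ incidences at the unique puncture. In particular the puncture has at least three arcs incident to it (in fact at least six incidences), so the hypothesis of Lemma~\ref{l:adjacency} is satisfied and the triangulation quiver coincides with the adjacency quiver.

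There is no real obstacle here; the argument is a one-line consequence of Lemma~\ref{l:adjacency} together with an Euler-characteristic computation. The only subtle point is to make sure that "three arcs incident to a puncture" is interpreted with loops counted with multiplicity two, which is the convention consistent with condition (T3) of~\cite{Ladkani12}; under any reasonable interpretation the estimate $E = 6g - 3 \geq 3$ with $g \geq 1$ provides ample room.
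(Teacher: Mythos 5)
Your proof is correct and follows the same route the paper intends: the corollary is stated as an immediate consequence of Lemma~\ref{l:adjacency}, the point being that condition (T3) is automatic when there is a single puncture. Your Euler-characteristic computation $E = 6g-3 \geq 3$ (with $g \geq 1$ forced by the existence of a triangulation) simply makes explicit the verification that the paper leaves implicit.
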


\subsection{Brauer graph algebras and triangulation algebras}
\label{ssec:algebra}

\begin{defn}
Let $(Q,f)$ be a ribbon quiver. A function
$\nu \colon \alpha \mapsto \nu_\alpha$ on $Q_1$ is called
\emph{$g$-invariant} if $\nu_{g(\alpha)} = \nu_\alpha$ for
any arrow $\alpha$.

A $g$-invariant function can thus be regarded as a function on the
nodes of the associated ribbon graph.
\end{defn}

Let $(Q,f)$ be a ribbon quiver. For an arrow $\alpha \in Q_1$, set
\begin{align*}
n_\alpha &= \min \{n>0 \,:\, g^n(\alpha)=\alpha\} \\
\oa &= \alpha \cdot g(\alpha) \cdot \ldots \cdot g^{n_\alpha-1}(\alpha) \\
\oa' &= \alpha \cdot g(\alpha) \cdot \ldots \cdot g^{n_\alpha-2}(\alpha)
\end{align*}

The function $\alpha \mapsto n_\alpha$ is obviously $g$-invariant, telling the
length of the $g$-cycle $\oa$ starting at $\alpha$. The path $\oa'$ is
``almost'' cycle; when $n_\alpha=1$ the arrow $\alpha$ is a loop at some vertex
$i$ and $\oa'$ is understood to be the path of length zero starting at $i$.

Let $K$ be a field. For a quiver $Q$, denote by $KQ$ its path algebra
over $K$ and by $\wh{KQ}$ the completed path algebra. The elements
of $KQ$ are finite $K$-linear combinations of paths in $Q$ whereas
those of $\wh{KQ}$ are possibly infinite such combinations.

\begin{defn}
Let $(Q,f)$ be a ribbon quiver, and let $m \colon Q_1 \to \bZ_{>0}$
and $c \colon Q_1 \to K^{\times}$ be $g$-invariant functions of
\emph{multiplicities} and \emph{scalars}, respectively.
The \emph{graph algebra} associated to these data is defined by
\[
\Gamma(Q,f,m,c) = KQ /
\langle \alpha \cdot f(\alpha) \,,\,
c_\alpha \oa^{m_\alpha} - c_{\balpha} \oba^{m_{\balpha}}
\rangle_{\alpha \in Q_1}.
\]
\end{defn}
In other words, the graph algebra is the Brauer graph algebra~\cite{Kauer98}
associated to the corresponding ribbon graph. In particular, it is special
biserial and hence of tame representation type.

\begin{defn}
Let $(Q,f)$ be a triangulation quiver and let $m \colon Q_1 \to \bZ_{>0}$
and $c \colon Q_1 \to K^{\times}$ be $g$-invariant functions of
\emph{multiplicities} and \emph{scalars}, respectively.

We say that $m$ is \emph{admissible} if $m_\alpha n_\alpha \geq 3$ for
every arrow $\alpha \in Q_1$. In this case we define the
\emph{triangulation algebra} associated to these data as a quotient
of the completed path algebra of $Q$ by the closure of an ideal
generated by suitable commutativity relations:
\[
\gL(Q,f,m,c) = \wh{KQ} / 
\overline{\langle \balpha \cdot f(\balpha) -
c_\alpha \oa^{m_\alpha-1} \cdot \oa' \rangle}_{\alpha \in Q_1}
\]
\end{defn}

Since the path $\oa^{m_\alpha-1} \cdot \oa'$ is of length $m_\alpha n_\alpha - 1$,
the definition of a triangulation algebra makes sense also when
$m_\alpha n_\alpha=2$, but then the corresponding arrow could be eliminated
from $Q$ complicating somewhat the remaining relations.
The admissibility condition ensures that the generating relations lie
in the square of the ideal generated by all arrows of $Q$ so no arrows
have to be deleted.

\begin{example} \label{ex:triang}
We identify some algebras in the literature as triangulation algebras.
In the first three examples, we use the presentation as quiver with relations
given in Theorem~\ref{t:quasi}\eqref{it:t:finite}.
\begin{enumerate}
\renewcommand{\labelenumi}{\theenumi.}
\item
The triangulation algebras of the triangulation quiver corresponding to
a monogon are algebras of quaternion type with one vertex (notation III.1(e)
in~\cite{Erdmann90}).

\item
The triangulation algebras of the triangulation quiver corresponding to
a punctured monogon are algebras of quaternion type with two vertices
(denoted $Q(2\mathcal{B})_1$ in~\cite{Erdmann90}).

\item
The triangulation algebras of the triangulation quivers corresponding to
triangulations of a sphere with three punctures are algebras of quaternion
type with three vertices (denoted $Q(3\mathcal{D})$ and $Q(3\mathcal{K})$
in~\cite{Erdmann90}).

\item
As shown in~\cite{Ladkani12}, the Jacobian algebra of the quiver
with potential associated by Labardini-Fragoso~\cite{Labardini09}
to a triangulation of a closed surface satisfying condition (T3)
is the triangulation algebra of its adjacency quiver (which
is a triangulation quiver in view of Lemma~\ref{l:adjacency})
with all multiplicities set to 1.
\end{enumerate}
\end{example}

The reason for the exclusion of the two exceptional cases from
Theorem~\ref{t:quasi} is explained by the next statement.

\begin{prop}
Let $(Q,f)$ be a connected triangulation quiver and
$m \colon Q_1 \to \bZ_{>0}$ an admissible $g$-invariant function of
multiplicities. Then the following conditions are equivalent:
\begin{enumerate}
\renewcommand{\theenumi}{\alph{enumi}}
\item
The ribbon graph of $(Q,f)$ with multiplicities is one of the two
shown in Figure~\ref{fig:except}, i.e.\ a punctured monogon with
multiplicities $(3,1)$ or a tetrahedron with all multiplicities
equal to 1.

\item
$m_\alpha n_\alpha = 3$ for all $\alpha \in Q_1$.

\item
$(m_\alpha n_\alpha)^{-1} + (m_{f(\alpha)}n_{f(\alpha)})^{-1} + 
(m_{f^2(\alpha)} n_{f^2(\alpha)})^{-1} = 1$ for some $\alpha \in Q_1$.
\end{enumerate}
\end{prop}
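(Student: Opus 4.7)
My plan is to prove the cycle of implications (a) $\Rightarrow$ (b) $\Rightarrow$ (c) $\Rightarrow$ (a), with the real content concentrated in (c) $\Rightarrow$ (a).

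For (a) $\Rightarrow$ (b) I verify directly in each exceptional case. Computing the involution $\alpha \mapsto \balpha$ and the permutation $g(\alpha) = \overline{f(\alpha)}$ from the quiver presentations in Figure~\ref{fig:quivers} shows that the two $g$-cycles of the punctured monogon have lengths $1$ and $3$, matching the prescribed multiplicities $3$ and $1$, while all four $g$-cycles of the tetrahedron have length $3$ with multiplicity $1$. Either way $m_\alpha n_\alpha = m_v d_v = 3$ at every node $v$. The implication (b) $\Rightarrow$ (c) is immediate, since each of the three summands then equals $1/3$.

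For (c) $\Rightarrow$ (a), admissibility gives $(m_\alpha n_\alpha)^{-1} \leq 1/3$ for every $\alpha$, so the equality in (c) forces each summand to be exactly $1/3$, hence $m_\alpha n_\alpha = 3$ throughout the $f$-orbit of the witnessing arrow. Since $m n$ is $g$-invariant, this reads $m_v d_v = 3$ at the (at most three) nodes through which the orbit passes. A propagation argument—using connectedness of $(Q,f)$, the $g$-invariance of $m$, and the identity $f^3 = \mathrm{id}$—then extends the identity $m_v d_v = 3$ to every node, forcing $(m_v, d_v) \in \{(1,3),(3,1)\}$ globally. Let $V_1, V_3$ count nodes by degree and $c_1, c_3$ the $f$-cycles by length. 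Half-edge counting gives $V_1 + 3V_3 = 2E = c_1 + 3c_3$, while Euler's formula applied to the closed orientable surface obtained by capping each $f$-cycle with a disk gives $\chi = V - E + F = (V_1 - V_3)/2 + c_1 + c_3 \leq 2$. A short non-negative integer case analysis leaves only the two solutions $(V_1,V_3,c_1,c_3) = (1,1,1,1)$ and $(0,4,0,4)$: the first is uniquely realized by the punctured monogon ribbon graph (one edge joining the degree-$1$ to the degree-$3$ node, with a loop at the latter) carrying multiplicities $(3,1)$, and the second, after imposing $f^3 = \mathrm{id}$ on the few 3-regular ribbon structures on four vertices, is forced to be the tetrahedron $K_4$ with all multiplicities $1$.

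The main obstacle is the propagation step in (c) $\Rightarrow$ (a), upgrading the local conclusion $m_v d_v = 3$ on up to three nodes to a global one; this step relies crucially on the combined rigidity of admissibility, $g$-invariance, and the triangulation condition $f^3 = \mathrm{id}$, and care is needed to avoid circular reasoning with the subsequent enumeration. A secondary but purely finite difficulty appears in the $(0,4,0,4)$ branch, where one must verify that no 3-regular multigraph on four vertices other than $K_4$ (i.e., no configuration with loops or multiple edges) admits a ribbon structure compatible with $f^3 = \mathrm{id}$; this reduces to listing the possible connected 3-regular multigraphs on four vertices, enumerating cyclic orderings of half-edges at each node, and computing the cycles of $f = \iota \sigma$.
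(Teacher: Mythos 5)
The paper states this proposition without proof, so your argument has to stand on its own. The easy directions are fine: (a)$\Rightarrow$(b) by direct computation of the $g$-cycles, (b)$\Rightarrow$(c) trivially, and the Euler-characteristic enumeration at the end (which is really the content of (b)$\Rightarrow$(a)) checks out --- from $V_1+3V_3=c_1+3c_3$ and $\chi\le 2$ one gets $3V_1+5V_3-4c_3=4$ and $c_1=(12-5V_1-3V_3)/4\ge 0$, which together with the divisibility constraints leave exactly $(V_1,V_3,c_1,c_3)=(1,1,1,1)$ and $(0,4,0,4)$, and $\chi\le 0$ is impossible.

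The genuine gap is the ``propagation step'' in (c)$\Rightarrow$(a): you assert it, call it the main obstacle, and never supply an argument --- and in fact no argument exists, because the claimed propagation is false. Condition (c) only constrains the (at most three) nodes met by the single witnessing $f$-orbit; nothing forces $m_vd_v=3$ at the remaining nodes. Two concrete counterexamples. First, take the tetrahedron quiver with multiplicity $1$ at three nodes and $2$ at the fourth: this is $g$-invariant and admissible ($m_vd_v\in\{3,6\}$), and the $f$-cycle bounding the face opposite the multiplicity-$2$ node has its three arrows based one at each of the three multiplicity-$1$, degree-$3$ corner nodes, so the sum in (c) is $1/3+1/3+1/3=1$ while (b) fails at the fourth node. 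Second, take the punctured monogon with multiplicity $1$ at the degree-$3$ node and $m>3$ at the degree-$1$ node: for the $f$-fixed loop $\eta$ one has $f(\eta)=f^2(\eta)=\eta$ and $m_\eta n_\eta=1\cdot 3=3$, so the sum in (c) is again $1$ while (b) fails at the degree-$1$ node. Consequently your plan (a)$\Rightarrow$(b)$\Rightarrow$(c)$\Rightarrow$(a) cannot be completed as written: either condition (c) must be read with ``for all $\alpha$'' (in which case (c)$\Leftrightarrow$(b) is immediate from admissibility, no propagation is needed, and all the content lies in (b)$\Leftrightarrow$(a)), or some extra hypothesis on the witnessing orbit is required. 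You should either prove (c)$\Rightarrow$(b) under such a corrected reading or flag the discrepancy explicitly; as it stands, the central step of your proof is a placeholder for a false statement.
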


\subsection{Mutations of triangulation quivers}
\label{ssec:mut}

Motivated by the relation between flips of triangulations and Fomin-Zelevinsky
mutation of their adjacency quivers~\cite{FST08}, we introduce a notion of
mutation for triangulation quivers.

\begin{defn} \label{def:mut}
Let $(Q,f)$ be a triangulation quiver and let $k$ be a vertex of $Q$
without loops. Denote by $\alpha$, $\balpha$ the two arrows that start at $k$
and observe that our assumption on $k$ implies that there are six distinct arrows
\begin{align*}
\alpha_1 = \alpha &,&
\beta_1 = f(\alpha) &,&
\gamma_1 = f^2(\alpha) &,&
\alpha_2 = \balpha &,&
\beta_2 = f(\balpha) &,&
\gamma_2 = f^2(\balpha)
\end{align*}
which form two cycles of the permutation $f$.

The \emph{mutation} of $(Q,f)$ at $k$ is the triangulation quiver $(Q',f')$
obtained from $(Q,f)$ by performing the following steps:
\begin{enumerate}
\item
Remove the two arrows $\beta_1$ and $\beta_2$;

\item
Replace the four arrows $\alpha_1$, $\alpha_2$, $\gamma_1$ and $\gamma_2$ 
with arrows in the opposite direction $\alpha^*_1$, $\alpha^*_2$,
$\gamma^*_1$ and $\gamma^*_2$;

\item
Add new arrows $\delta_{12}$ and $\delta_{21}$ with
\begin{align*}
s(\delta_{12}) = s(\gamma_1) &,&
t(\delta_{12}) = t(\alpha_2) &,&
s(\delta_{21}) = s(\gamma_2) &,&
t(\delta_{21}) = t(\alpha_1) .
\end{align*}

\item
Define the permutation $f'$ on the new set of arrows $Q'_1$ 
by $f'(\eps)=f(\eps)$ if $\eps$ is an arrow of $Q$ which has not been
changed, and by
\begin{align*}
f'(\alpha^*_1) = \gamma^*_2 &,&
f'(\gamma^*_2) = \delta_{21} &,&
f'(\delta_{21}) = \alpha^*_1 \\
f'(\alpha^*_2) = \gamma^*_1 &,&
f'(\gamma^*_1) = \delta_{12} &,&
f'(\delta_{12}) = \alpha^*_2
\end{align*}
for the other arrows.
\end{enumerate}
\end{defn}
At the level of the underlying quivers, this is very similar to Fomin-Zelevinsky
mutation, but note that $Q'$ may contain 2-cycles.
 
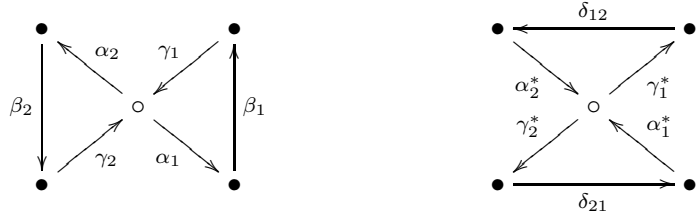
\begin{figure}
\begin{align*}
\xymatrix@R=1.5pc{
{\bullet} \ar[dd]_{\beta_2} & & {\bullet} \ar[dl]_{\gamma_1} \\
& {\circ} \ar[dr]_{\alpha_1} \ar[ul]_{\alpha_2} \\
{\bullet} \ar[ur]_{\gamma_2} & & {\bullet} \ar[uu]_{\beta_1}
}
&& 
\xymatrix@R=1.5pc{
{\bullet} \ar[dr]_{\alpha^*_2} && {\bullet} \ar[ll]_{\delta_{12}} \\
& {\circ} \ar[ur]_{\gamma^*_1} \ar[dl]_{\gamma^*_2} \\
{\bullet} \ar[rr]_{\delta_{21}} && {\bullet} \ar[ul]_{\alpha^*_1}
}
\end{align*}
\caption{Mutation of triangulation quivers at the middle vertex $\circ$.
Some of the other vertices may coincide, and only the arrows that
change are shown.}
\label{fig:mut}
\end{figure}

\begin{lemma}
The triangulation quivers of two triangulations related by a flip at some arc
are related by a mutation at the vertex corresponding to that arc.
\end{lemma}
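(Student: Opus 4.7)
The plan is to reduce to a purely local analysis around the flipped arc. A flip affects only the two triangles sharing the flipped arc, and the triangulation quiver construction is strictly local: each triangle independently contributes one $f$-cycle of length three. Hence the parts of the two quivers corresponding to triangles unaffected by the flip coincide, matching the fact that the mutation of Definition~\ref{def:mut} alters only the six arrows incident to the vertex $k$ together with the two $f$-cycles containing them.

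First I would fix local notation. Let $k$ denote the flipped arc and let $T_1, T_2$ be the two triangles of $\tau$ sharing $k$; together they form, after possible immersion, a quadrilateral with diagonal $k$. Listing the sides of $T_i$ in clockwise order starting from $k$ as $(k, a_i, c_i)$, the construction of Section~\ref{ssec:quiver} produces arrows $\alpha_i \colon k \to a_i$, $\beta_i \colon a_i \to c_i$, $\gamma_i \colon c_i \to k$ forming the $f$-cycle $(\alpha_i\,\beta_i\,\gamma_i)$ for $i=1,2$. These six arrows are precisely the six arrows at $k$ listed in Definition~\ref{def:mut}, with $\alpha_1, \alpha_2$ playing the roles of $\alpha, \balpha$.

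Next I would analyse $\tau'$. The flip replaces $k$ by the opposite diagonal $k^*$, producing two new triangles whose clockwise side sequences starting from $k^*$ are $(k^*, c_1, a_2)$ and $(k^*, c_2, a_1)$; verifying this combinatorial pairing of $c_i$ with $a_j$ for $j \neq i$ is the one step that genuinely uses the chosen orientation of the surface. Applying the construction to $\tau'$ then yields, at the vertex now represented by $k^*$, the arrows $k \to c_1$, $c_1 \to a_2$, $a_2 \to k$ and $k \to c_2$, $c_2 \to a_1$, $a_1 \to k$ arranged into two $f'$-cycles. A direct comparison identifies each $k \to c_i$ with $\gamma_i^*$, each $a_i \to k$ with $\alpha_i^*$, and each $c_i \to a_j$ with $\delta_{ij}$, and shows that the resulting $f'$-cycles $(\alpha_1^*\,\gamma_2^*\,\delta_{21})$ and $(\alpha_2^*\,\gamma_1^*\,\delta_{12})$ coincide with those prescribed in Definition~\ref{def:mut}.

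The main potential obstacle is handling degenerate configurations in which some of the sides $a_1, c_1, a_2, c_2$ coincide among themselves or with $k$. Coincidence of one of these sides with $k$ itself corresponds to $k$ being the inner edge of a self-folded triangle, which would produce a loop at $k$ in the quiver; this case is excluded both by the hypothesis of Definition~\ref{def:mut} and by the fact that such arcs are not flippable in the sense of Fomin--Shapiro--Thurston. The remaining coincidences merely identify certain vertices of the quiver without changing the arrow combinatorics tracked by the mutation, so the argument above applies uniformly.
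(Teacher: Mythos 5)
Your argument is correct and takes essentially the same route as the paper; the paper's own proof records only the one non-obvious point --- that a flippable arc cannot carry a loop, since a loop $\alpha$ at $k$ has either $f(\alpha)=\alpha$ (so $k$ is a boundary segment) or $g(\alpha)=\alpha$ (so $k$ is the inner side of a self-folded triangle) --- and leaves the local comparison of $f$-cycles implicit. Your explicit verification of that comparison, with the clockwise side sequences $(k^*,c_1,a_2)$ and $(k^*,c_2,a_1)$ matching the cycles $(\alpha_2^*\,\gamma_1^*\,\delta_{12})$ and $(\alpha_1^*\,\gamma_2^*\,\delta_{21})$ of Definition~\ref{def:mut}, checks out.
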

\begin{proof}
We need to verify that a vertex corresponding to a flippable arc cannot
have loops. Indeed, for a loop $\alpha$ at some vertex $k$ we have that either
$f(\alpha)=\alpha$ or $g(\alpha)=\alpha$. In the former case $k$ corresponds
to a boundary segment, whereas in the latter case it corresponds to an arc
which is the inner side of a self-folded triangle.
\end{proof}

The permutation $f'$ on $Q'_1$ defines the permutation $g'$ by
$g'(\alpha')=\overline{f'(\alpha')}$ for $\alpha' \in Q'_1$.
Any $g$-invariant function $\nu$ gives rise to a $g'$-invariant function
$\nu'$ on $Q'_1$ by setting $\nu'_\eps = \nu_\eps$ for the arrows in $Q'_1$
that are also in $Q_1$ and
\begin{align*}
\nu'_{\alpha^*_1} = \nu'_{\gamma^*_1} = \nu_{\beta_1} &,&
\nu'_{\alpha^*_2} = \nu'_{\gamma^*_2} = \nu_{\beta_2} &,&
\nu'_{\delta_{12}} = \nu_{\gamma_1} &,&
\nu'_{\delta_{21}} = \nu_{\gamma_2}
\end{align*}
for the other arrows.
In particular, any two $g$-invariant functions $m \colon Q_1 \to \bZ_{>0}$ and
$c \colon Q_1 \to K^{\times}$ of multiplicities and scalars on $(Q,f)$
give rise to $g'$-invariant functions of multiplicities
$m' \colon Q'_1 \to \bZ_{>0}$ and scalars $c' \colon Q'_1 \to K^{\times}$
on $(Q',f')$.

For the rest of this section we fix a triangulation quiver $(Q,f)$ and
consider its mutation $(Q',f')$ at some vertex $k$ without loops.

\begin{prop}
The ribbon graphs of $(Q,f)$ and $(Q',f')$ are related by an elementary move
in the sense of Kauer~\cite{Kauer98}. Hence the corresponding Brauer graph
algebras $\Gamma(Q,f,m,c)$ and $\Gamma(Q',f',m',c')$
are derived equivalent for any choice of multiplicities and scalars.
\end{prop}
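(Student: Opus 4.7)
The plan is to split the proof into two parts. The second statement follows from the first via Kauer's theorem in~\cite{Kauer98}: an elementary move at an edge of a Brauer graph (with multiplicities and scalars) yields a derived equivalence between the associated Brauer graph algebras, and the prescribed values of $m'$ and $c'$ are exactly the transport of $m$ and $c$ under the identification of nodes induced by the move. This compatibility is automatic from the $g$-invariance, which says that $m$ and $c$ are functions on the nodes of the ribbon graph, together with the formulas defining $\nu'$ given before the proposition. Thus the heart of the argument is the combinatorial first claim.

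To establish that claim, I would first write down Kauer's elementary move explicitly in the $(H, \iota, \sigma)$ language: at the edge $e = \{h, \iota(h)\}$ it replaces $\sigma$ by a new permutation $\sigma'$ obtained by a single explicit local modification involving $h$, $\iota(h)$, $\sigma(h)$ and $\sigma(\iota(h))$, while leaving $\iota$ unchanged. I would then compute $(\iota', g')$ on $Q'_1$ directly from Definition~\ref{def:mut}, using the two new $f'$-cycles $(\alpha_1^*, \gamma_2^*, \delta_{21})$ and $(\alpha_2^*, \gamma_1^*, \delta_{12})$ together with the prescribed source and target data, and determine $\iota'$ at each of the vertices $k, t(\alpha_1), t(\alpha_2), s(\gamma_1), s(\gamma_2)$ by identifying, at each such vertex, the two outgoing arrows in $Q'$.

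The match then goes as follows. Under the natural identification of the unchanged arrows in $Q_1$ and $Q'_1$, and after viewing $\alpha_i^*, \gamma_i^*, \delta_{12}, \delta_{21}$ as playing the roles of the removed/reversed arrows, the pair $(\iota', g')$ is seen to coincide with the pair obtained from $(\iota, g)$ by Kauer's move at the edge $\{\alpha_1, \alpha_2\}$. The key local check is that $g'(\alpha_1^*) = \gamma_1^*$ and $g'(\alpha_2^*) = \gamma_2^*$, reflecting the fact that the move has slid the two endpoints of $e$ by one step in the cyclic orders around the nodes $u$ and $v$ containing $\alpha_1$ and $\alpha_2$ in the ribbon graph of $(Q,f)$. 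This in turn forces the remaining cycle structure of $g'$ to coincide with that prescribed by Kauer's rule.

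The main obstacle is the bookkeeping required to handle degenerate configurations: for example when the $g$-cycles of $\alpha_1$ and $\alpha_2$ coincide (so $e$ is a loop in the ribbon graph), when some of the vertices $t(\alpha_1), t(\alpha_2), s(\gamma_1), s(\gamma_2)$ coincide in $Q$, or when one of the ``next arrows'' $\bar\beta_1, \bar\beta_2, \bar\gamma_1, \bar\gamma_2$ happens to lie among the six distinguished arrows of the mutation. Each such configuration must be treated uniformly to confirm that the modification of $\sigma$ is still of the local form covered by Kauer. Once this case analysis is completed, Kauer's theorem supplies the required tilting complex and hence the derived equivalence of $\Gamma(Q, f, m, c)$ with $\Gamma(Q', f', m', c')$.
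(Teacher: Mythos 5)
The paper states this proposition without proof, so there is no argument of the author's to measure yours against; I can only assess the plan on its own terms, and it is the natural (essentially the only) one: translate both Kauer's elementary move and Definition~\ref{def:mut} into the permutation language $(H,\iota,\sigma)$ with $H=Q_1$, $\iota(\alpha)=\balpha$, $\sigma=g$, and match. Your key local identity is correct: in $Q'$ the two arrows leaving $k$ are $\gamma_1^*$ and $\gamma_2^*$, so $g'(\alpha_1^*)=\overline{f'(\alpha_1^*)}=\overline{\gamma_2^*}=\gamma_1^*$ and likewise $g'(\alpha_2^*)=\gamma_2^*$; continuing, $g'(\gamma_i^*)=\bar\gamma_i$ and $g'(\delta_{21})=\bar\beta_1$, $g'(\delta_{12})=\bar\beta_2$, which exhibits the edge $k$ as having been detached from the $g$-cycles of $\alpha_1$ and $\alpha_2$ and reinserted immediately after $\beta_1=f(\alpha_1)$ and $\beta_2=f(\alpha_2)$ in their respective $g$-cycles. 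Two remarks. First, your description of the move as sliding the endpoints of $e$ ``by one step in the cyclic orders around $u$ and $v$'' is imprecise: the endpoint does not move within the same node but jumps across the successor edge $\{\beta_i,\bar\beta_i\}$ to the node at its far end (which may or may not coincide with the original one); this is exactly Kauer's move, and it is also what makes the transport rule $\nu'_{\gamma_i^*}=\nu_{\beta_i}$ the right one for the second assertion. Second, the case analysis you flag as the main obstacle largely evaporates if you commit fully to the permutation formalism: once you fix the uniform bijection $Q_1\to Q_1'$ ($\alpha_i\mapsto\gamma_i^*$, $\beta_i\mapsto\alpha_i^*$, $\gamma_1\mapsto\delta_{12}$, $\gamma_2\mapsto\delta_{21}$, identity elsewhere), the computation of $\iota'$ and $g'$ is purely local in $H$ and is insensitive to coincidences among the vertices $t(\alpha_i)$, $s(\gamma_i)$ or among the $g$-cycles, so no separate degenerate cases need to be treated. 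With those adjustments the outline becomes a complete proof, and the derived equivalence then follows from Kauer's tilting complex exactly as you say.
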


Let $p \colon Q_1 \to xK[[x]]$ be a $g$-invariant function whose values are
power series without constant term. Consider the potential on $Q$ defined by
\begin{equation} \label{e:pot_p}
W = \sum_\alpha \alpha \cdot f(\alpha) \cdot f^2(\alpha) -
\sum_\beta p_{\beta}(\omega_\beta)
\end{equation}
where the sums run over representatives $\alpha$ of $f$-cycles and $\beta$
of $g$-cycles in $Q_1$.
The function $p$ gives rise to a $g'$-invariant function $p'$
and hence to the potential on $Q'$
\[
W' = \sum_{\alpha'} \alpha' \cdot f'(\alpha') \cdot f'^2(\alpha') -
\sum_{\beta'} p'_{\beta'}(\omega_{\beta'})
\]
where the sums run over representatives $\alpha'$ of $f'$-cycles and
$\beta'$ of $g'$-cycles in $Q'_1$.

The next proposition compares $(Q',W')$ with the mutation of the quiver
with potential $(Q,W)$ at the vertex $k$ as defined in~\cite{DWZ08}.

\begin{prop} \label{p:QPmut}
Assume that there are no $2$-cycles in $Q$ passing through the vertex~$k$.
Then $(Q',W')$ is right equivalent to the mutation of $(Q,W)$ at $k$.
\end{prop}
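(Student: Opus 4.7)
The plan is to apply the Derksen--Weyman--Zelevinsky mutation $\mu_k=(\text{reduction})\circ\wt\mu_k$ to $(Q,W)$ at $k$ and verify that the result is right equivalent to $(Q',W')$. The no-$2$-cycles hypothesis ensures that $\mu_k$ is defined at $k$ and that no unexpected quadratic terms appear in the premutation potential.

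First I would compute the premutation $(\wt Q,\wt W)=\wt\mu_k(Q,W)$. The quiver $\wt Q$ is obtained from $Q$ by reversing the four arrows $\alpha_i,\gamma_i$ ($i=1,2$) incident to $k$, producing arrows $\alpha_i^*,\gamma_i^*$, and by introducing four composite arrows $[\gamma_i\alpha_j]\colon s(\gamma_i)\to t(\alpha_j)$ for $i,j\in\{1,2\}$. The premutation potential is obtained from $W$ by replacing every occurrence of a subword $\gamma_i\alpha_j$ in a cyclic path by the arrow $[\gamma_i\alpha_j]$, and by adding the cubic terms $\sum_{i,j}[\gamma_i\alpha_j]\,\alpha_j^*\,\gamma_i^*$. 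Using $f(\alpha_i)=\beta_i$, $f(\beta_i)=\gamma_i$, $g(\gamma_1)=\alpha_2$ and $g(\gamma_2)=\alpha_1$, the substitution turns the two $f$-cycle cubics $\alpha_i\beta_i\gamma_i$ into the quadratic terms $\beta_i[\gamma_i\alpha_i]$ (after cyclic rotation), and modifies precisely the two $g$-cycles $\omega_{\gamma_i}$ by replacing their consecutive pair $\gamma_i\alpha_j$ (with $j\neq i$) by $[\gamma_i\alpha_j]$. By the hypothesis, no $g$-cycle through $k$ has length $2$, so the power series $p_\beta(\omega_\beta)$ contribute no quadratic terms, and the quadratic part of $\wt W$ is exactly $\beta_1[\gamma_1\alpha_1]+\beta_2[\gamma_2\alpha_2]$.

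Next I would apply the DWZ Splitting Theorem to obtain a right-equivalence $\phi$ with $\phi(\wt W)=W_{\mathrm{triv}}+W_{\mathrm{red}}$, where $W_{\mathrm{triv}}=\beta_1[\gamma_1\alpha_1]+\beta_2[\gamma_2\alpha_2]$ and $W_{\mathrm{red}}$ lives on the reduced quiver $Q_{\mathrm{red}}$ obtained from $\wt Q$ by deleting the four ``trivial'' arrows $\beta_1,\beta_2,[\gamma_1\alpha_1],[\gamma_2\alpha_2]$. The bijection $\delta_{12}\leftrightarrow[\gamma_1\alpha_2]$ and $\delta_{21}\leftrightarrow[\gamma_2\alpha_1]$ identifies $Q_{\mathrm{red}}$ with $Q'$ as described in Definition~\ref{def:mut}. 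Under this identification the surviving DWZ cubic additions $[\gamma_i\alpha_j]\alpha_j^*\gamma_i^*$ (with $j\neq i$) match, up to cyclic rotation, the two new $f'$-cycle contributions $\alpha_j^*\gamma_i^*\delta_{ij}$ in $W'$; the cubic $f$-cycles not through $k$ contribute identically to $W$ and $W'$; and the shortened $g$-cycles $\wt\omega_{\gamma_i}$ coincide with the new $g'$-cycles $\omega'_{\delta_{ij}}$, matching after using $m'_{\delta_{ij}}=m_{\gamma_i}$ and $c'_{\delta_{ij}}=c_{\gamma_i}$.

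I expect the main obstacle to be verifying that the change of variables $\phi$ transforms the $g$-cycles of $Q$ passing through $\beta_i$, namely $\omega_{\beta_i}=\beta_i\bar\gamma_i\dots$, into the corresponding $g'$-cycles $\omega'_{\alpha_i^*}=\alpha_i^*\gamma_i^*\bar\gamma_i\dots$ of $Q'$ (using that $g'(\gamma_i^*)=\overline{\delta_{ij}}=\bar\gamma_i$ in $Q'$). Inspection of the quadratic part shows that $\phi$ is determined iteratively by substitutions of the form $\beta_i\mapsto\beta_i\pm\alpha_i^*\gamma_i^*+(\text{higher order})$, and one must show that under such substitutions the formal series $p_{\beta_i}(\omega_{\beta_i})$ becomes $p'_{\alpha_i^*}(\omega'_{\alpha_i^*})$, with $m'_{\alpha_i^*}=m_{\beta_i}$ and $c'_{\alpha_i^*}=c_{\beta_i}$. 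This is a careful sign and convergence check inside $\wh{KQ'}$; it works because $[\gamma_i\alpha_i]$ appears linearly in $\wt W-W_{\mathrm{triv}}$ only in the DWZ cubic $[\gamma_i\alpha_i]\alpha_i^*\gamma_i^*$, so the leading-order substitution $\beta_i\mapsto\mp\alpha_i^*\gamma_i^*$ already produces the correct $g'$-cycle, with the higher-order corrections from the iteration absorbed into the completed path algebra in a way compatible with the power series structure.
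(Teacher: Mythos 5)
Your overall strategy (compute the DWZ premutation, split off the trivial part with the Splitting Theorem, and identify the reduced part with $(Q',W')$ via $\delta_{12}\leftrightarrow[\gamma_1\alpha_2]$, $\delta_{21}\leftrightarrow[\gamma_2\alpha_1]$) is the natural one, and the paper itself offers no written proof to compare it with. The bookkeeping of the $f$-cycle terms, of the added DWZ $3$-cycles against the two new $f'$-cycles, and of the substitution $\beta_i\mapsto -\alpha_i^*\gamma_i^*+\cdots$ transforming $\omega_{\beta_i}$ into $\omega'_{\alpha_i^*}$ is correct in outline (up to the sign/rescaling chase you already flag, which does need care because $p_{\beta}(-\omega)\neq p_{\beta}(\omega)$ in general and the rescalings propagate through the other terms).

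There is, however, a genuine gap in the sentence ``no $g$-cycle through $k$ has length $2$, so the power series $p_\beta(\omega_\beta)$ contribute no quadratic terms, and the quadratic part of $\wt W$ is exactly $\beta_1[\gamma_1\alpha_1]+\beta_2[\gamma_2\alpha_2]$.'' The hypothesis gives $n_{\gamma_i}\geq 3$, but the substitution $\gamma_i\alpha_j\mapsto[\gamma_i\alpha_j]$ shortens each $g$-cycle through $k$ by the number of times it passes through $k$, so $[\omega_{\gamma_i}]$ has length $n_{\gamma_i}-1$ (or $n_{\gamma_i}-2$ if $\gamma_1,\gamma_2$ lie on the same $g$-cycle). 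When this equals $2$ and the linear coefficient of $p_{\gamma_i}$ is nonzero, $\wt W$ acquires a quadratic term such as $[\gamma_1\alpha_2]\,g(\alpha_2)=\delta_{12}\,g(\alpha_2)$. This happens under the stated hypothesis: for the tetrahedron quiver of Figure~\ref{fig:except} there are no $2$-cycles at all, every $g$-cycle has length $3$ and meets each vertex once, so mutating at any $k$ with $p_\alpha(x)=c_\alpha x$ produces exactly such terms. In that situation the DWZ reduction deletes not only $\beta_1,\beta_2,[\gamma_1\alpha_1],[\gamma_2\alpha_2]$ but also $\delta_{12},\delta_{21}$ and their partners, so the reduced mutation lives on a strictly smaller quiver than $Q'$; correspondingly $(Q',W')$ itself is then non-reduced (the new $g'$-cycles $\omega'_{\delta_{ij}}$ have length $2$) and cannot be right equivalent to the reduced QP $\mu_k(Q,W)$. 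You therefore need either to strengthen the hypothesis to guarantee that the shortened cycles $[\omega_{\gamma_i}]$ still have length at least $3$ whenever the corresponding coefficient of $p$ is nonzero (which also makes $(Q',W')$ reduced), or to prove the weaker statement that $\wt\mu_k(Q,W)$ is right equivalent to the direct sum of $(Q',W')$ with the trivial QP on $\beta_1[\gamma_1\alpha_1]\oplus\beta_2[\gamma_2\alpha_2]$, and then observe that this gives the reduced mutation exactly when no such length-$2$ cycles survive. As written, your identification of the trivial part, and hence of the reduced quiver with $Q'$, does not follow from the stated hypothesis alone.
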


In the notation of Definition~\ref{def:mut}, the condition in the
proposition is equivalent to the conditions that
$n_{\alpha_1} > 2$, $n_{\gamma_1} > 2$, $n_{\beta_1} > 1$ and
$n_{\beta_2} > 1$.

Combining Proposition~\ref{p:QPmut} with Corollary~\ref{c:onep}, we get:
\begin{cor}
Let $Q$ be the adjacency quiver of a triangulation of a closed surface with
exactly one puncture and view it as a triangulation quiver $(Q,f)$.
Then for any power series $p(x) \in xK[[x]]$ the potential
\[
-p(\omega) + \sum_\alpha \alpha \cdot f(\alpha) \cdot f^2(\alpha)
\]
(where the sum runs over representatives $\alpha$ of $f$-cycles and 
$\omega$ is the cycle $\omega_\beta$ for some $\beta \in Q_1$)
is non-degenerate.
In particular, the set of power series
\[ 
\{0 \} \cup \{x^m \,:\, \text{$m$ is not divisible by the characteristic
of $K$}\}
\]
yields infinitely many non-degenerate potentials on $Q$ whose Jacobian algebras
are pairwise non-isomorphic.
\end{cor}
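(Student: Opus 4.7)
The plan is to reduce non-degeneracy of $W_p = -p(\omega) + \sum_{\alpha}\alpha\cdot f(\alpha)\cdot f^2(\alpha)$ to a geometric statement about triangulations, by using Proposition~\ref{p:QPmut} to transport each DWZ-mutation of $(Q,W_p)$ to a flip of the underlying triangulation. First I would verify that the numerical hypotheses of Proposition~\ref{p:QPmut} are satisfied at every vertex of every quiver that will appear. Because the surface has a single puncture, the associated ribbon graph has a unique node, so there is only one $g$-cycle; its length is the total number of arc-endpoints at the puncture, which by an Euler-characteristic computation equals $2(6g-3)=12g-6\ge 6$ for a closed surface of genus $g\ge 1$ with one puncture. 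Hence $n_\alpha\ge 6$ for every arrow $\alpha$, so the conditions $n_{\alpha_1},n_{\gamma_1}>2$ and $n_{\beta_1},n_{\beta_2}>1$ hold at every vertex without a loop.

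Next I would argue by induction on the length of an admissible mutation sequence $k_1,\dots,k_r$ (each $k_i$ without a loop in the current quiver) that the iterated DWZ-mutation $\mu_{k_r}\cdots\mu_{k_1}(Q,W_p)$ is right-equivalent to a pair $(Q^{(r)},W^{(r)}_p)$ of the same shape, where $(Q^{(r)},f^{(r)})$ is the triangulation quiver of the triangulation $\tau^{(r)}$ obtained by flipping the arcs corresponding to $k_1,\dots,k_r$, and $W^{(r)}_p$ is built by the same recipe on $(Q^{(r)},f^{(r)})$ and the unique $g^{(r)}$-cycle. Each inductive step combines Corollary~\ref{c:onep} (triangulation and adjacency quivers coincide), the lemma preceding Figure~\ref{fig:mut} (a flip corresponds to a combinatorial mutation of triangulation quivers), and Proposition~\ref{p:QPmut} (the combinatorial mutation equals the DWZ-mutation under the numerical hypotheses just checked). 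Since flipping does not alter the ambient surface, the numerical hypothesis $n_\alpha\ge 6$ is preserved, so the induction continues.

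The crux is to check that each $Q^{(i)}$ is $2$-acyclic, which is the step I expect to be the main obstacle. A $2$-cycle between two vertices of the adjacency quiver would force the two corresponding arcs to appear as common sides of two triangles with opposite cyclic orientations, but this is incompatible with orienting all triangles consistently with the orientation of the surface. Because every triangulation of the once-punctured closed surface of positive genus satisfies condition (T3) (each of the $6g-3$ arcs is incident to the unique puncture), Corollary~\ref{c:onep} identifies the triangulation quiver with the adjacency quiver at each step, and the geometric argument above shows the latter is $2$-acyclic. Hence $2$-acyclicity is preserved along the entire mutation orbit of $(Q,W_p)$, which by definition means $(Q,W_p)$ is non-degenerate.

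Finally, for the statement about infinitely many Jacobian algebras, I would identify, for each positive integer $m$ not divisible by $\mathrm{char}\,K$, the Jacobian algebra of $(Q,W_{x^m})$ with the triangulation algebra of $(Q,f)$ with constant multiplicity $m$ (and a suitable constant scalar), using Theorem~\ref{t:quasi}\eqref{it:t:potential}; the admissibility $m n_\alpha\ge 3$ is automatic since $n_\alpha\ge 6$. These algebras are finite-dimensional, and their total dimensions grow linearly in $m$ (the indecomposable projective at each vertex has dimension controlled by $m$ and the common $g$-cycle length), so they are pairwise non-isomorphic. The Jacobian algebra of $(Q,W_0)$, by contrast, is infinite-dimensional, since the only relations $f(\alpha)f^2(\alpha)=0$ impose no bound on powers of the $g$-cycle. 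Thus the listed power series yield infinitely many non-degenerate potentials on $Q$ with pairwise non-isomorphic Jacobian algebras.
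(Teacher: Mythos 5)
Your overall architecture coincides with the paper's intended derivation: identify DWZ-mutations of $(Q,W_p)$ with flips of the triangulation via Proposition~\ref{p:QPmut}, Corollary~\ref{c:onep} and the flip--mutation lemma, propagate the shape of the potential along the mutation orbit, and separate the Jacobian algebras by dimension using Theorem~\ref{t:quasi}. The numerical verification $n_\alpha=12g-6\ge 6$ is correct and does what you need: it gives the hypotheses of Proposition~\ref{p:QPmut} at every vertex and also excludes loops (a loop would force $n_\alpha=1$ or a boundary segment). The second half of your argument (linear growth of $\dim\gL$ in $m$, infinite-dimensionality for $p=0$) is sound.

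However, your justification of the crucial $2$-acyclicity step is wrong. You claim that a $2$-cycle would force two triangles to share two arcs ``with opposite cyclic orientations,'' which you say is incompatible with a consistent orientation of the surface. This is false: the sphere with three punctures in Figure~\ref{fig:quivers} is triangulated by two consistently oriented triangles sharing all three arcs, and its triangulation quiver contains $2$-cycles (e.g.\ $\alpha_1\colon 1\to 2$ and $\beta_2\colon 2\to 1$). Consistent orientation never obstructs $2$-cycles; what obstructs them is condition (T3). Concretely, a $2$-cycle in a triangulation quiver of a closed surface arises only from a self-folded triangle or from two triangles sharing two sides, and either configuration produces a puncture with at most two incident arc-ends --- impossible here, since the unique puncture carries all $2(6g-3)\ge 6$ arc-ends in every triangulation. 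Equivalently, and more in the spirit of the paper: since (T3) holds for \emph{every} triangulation of the once-punctured closed surface, Lemma~\ref{l:adjacency} and Corollary~\ref{c:onep} identify each $Q^{(i)}$ with the Fomin--Shapiro--Thurston adjacency quiver, which is $2$-acyclic by construction. So the conclusion you need is already supplied by the corollary you invoke; your auxiliary geometric argument should be deleted and replaced by this, because as stated it does not use the single-puncture hypothesis at all and would ``prove'' $2$-acyclicity for surfaces where it fails.
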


Triangulation algebras are Jacobian algebras of quivers with potentials
under some conditions on the characteristic of the ground field.
\begin{lemma}
Let $m \colon Q_1 \to \bZ_{>0}$ and $c \colon Q_1 \to K^{\times}$ be
$g$-invariant functions and assume that all the multiplicities
$m_\alpha$ are invertible over $K$. Then the triangulation algebra
$\gL(Q,f,m,c)$ is the Jacobian algebra of $(Q,W)$ where the potential
$W$ is of the form~\eqref{e:pot_p} for the $g$-invariant
function $p \colon Q_1 \to xK[[x]]$ defined by
$p_\alpha(x) = c_\alpha m_{\alpha}^{-1} x^{m_\alpha}$.
\end{lemma}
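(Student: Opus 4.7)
The plan is to match the Jacobian relations of $(Q,W)$ with the commutativity relations of $\gL(Q,f,m,c)$ via an explicit computation of cyclic derivatives. Since both algebras are quotients of $\wh{KQ}$ by the closure of an ideal indexed bijectively by $Q_1$, it suffices to show that these two families of generators agree, up to a reindexing by a permutation of $Q_1$.

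The core step is to compute $\partial_\gamma W$ for each arrow $\gamma\in Q_1$. The cubic piece $\sum_\alpha \alpha f(\alpha) f^2(\alpha)$ is a sum over $f$-orbit representatives (each $f$-orbit has size one or three since $f^3=\mathrm{id}$); by cyclic invariance its derivative with respect to $\gamma$ is $f(\gamma) f^2(\gamma)$ whenever $\gamma$ lies in an $f$-orbit of size three. The higher-order piece $-\sum_\beta c_\beta m_\beta^{-1}\omega_\beta^{m_\beta}$ receives a contribution only from the unique $g$-orbit containing $\gamma$; the arrow $\gamma$ appears exactly $m_\gamma$ times in that cyclic word, and each occurrence contributes the same cyclically rotated path, yielding $-c_\gamma \cdot g(\gamma)g^2(\gamma)\cdots g^{n_\gamma-1}(\gamma)\cdot\omega_\gamma^{m_\gamma-1}$ after using the $g$-invariance of $c$ and $m$ and cancelling $m_\gamma^{-1}\cdot m_\gamma=1$ (which is precisely where the hypothesis that $m_\gamma$ be invertible in $K$ enters). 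Combining, one obtains
\[
\partial_\gamma W \;=\; f(\gamma)\,f^2(\gamma) \;-\; c_\gamma \cdot g(\gamma)\,g^2(\gamma)\cdots g^{n_\gamma-1}(\gamma) \cdot \omega_\gamma^{m_\gamma-1}.
\]

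The final step is the change of variables $\gamma=f^2(\balpha)$, equivalently $\alpha=g(\gamma)$: indeed $g=\iota f$ together with $f^3=\mathrm{id}$ and $\iota\balpha=\alpha$ give $g(f^2(\balpha))=\iota f^3(\balpha)=\alpha$, so this is a bijection of $Q_1$. Under it, $f(\gamma)=\balpha$, $f^2(\gamma)=f(\balpha)$, the arrows $\gamma$ and $\alpha$ share a $g$-orbit (so $c_\gamma=c_\alpha$, $m_\gamma=m_\alpha$, $n_\gamma=n_\alpha$), and one identifies
\[
g(\gamma)\,g^2(\gamma)\cdots g^{n_\gamma-1}(\gamma) \;=\; \alpha\,g(\alpha)\cdots g^{n_\alpha-2}(\alpha) \;=\; \oa'.
\]
Since $\omega_\gamma=\gamma\cdot\oa'$ while $\oa=\oa'\cdot\gamma$, the path identity $\oa'\cdot(\gamma\oa')^{m_\alpha-1}=(\oa'\gamma)^{m_\alpha-1}\cdot\oa'=\oa^{m_\alpha-1}\oa'$ rewrites $\partial_\gamma W=0$ as $\balpha\,f(\balpha)-c_\alpha\,\oa^{m_\alpha-1}\oa'=0$, which is exactly the commutativity relation defining $\gL$. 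The admissibility condition places all generators in the square of the Jacobson radical, so the topological and algebraic closures of the two ideals agree, and $J(Q,W)\simeq\gL(Q,f,m,c)$.

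The main bookkeeping obstacle is the treatment of $f$-orbits of length one, i.e.\ loops $\alpha$ with $f(\alpha)=\alpha$: the cubic term $\alpha^3$ then has formal cyclic derivative $3\alpha^2$ rather than the expected $f(\alpha) f^2(\alpha)=\alpha^2$. Under the invertibility of $3$ (or, failing this, an appropriate rescaling of $c$ at such loops) the discrepancy is absorbed and the verification proceeds exactly as above; all other cases are covered by the generic computation.
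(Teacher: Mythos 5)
The paper states this lemma without proof, so I am judging your argument on its own merits; the computation you carry out (cyclic derivatives of $W$, the reindexing $\gamma=f^2(\balpha)$, i.e.\ $\alpha=g(\gamma)$, and the path identity $\oa'\,\omega_\gamma^{m_\alpha-1}=\oa^{m_\alpha-1}\oa'$) is correct and is surely the intended one: for $\gamma$ in an $f$-orbit of size three it reproduces exactly the defining relation $\balpha f(\balpha)-c_\alpha\oa^{m_\alpha-1}\oa'$ of $\gL$, and the cancellation $m_\gamma^{-1}\cdot m_\gamma$ is indeed where the hypothesis enters.

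The genuine gap is precisely the point you flag at the end and then dismiss: $f$-fixed loops. These are not a marginal case (every boundary segment produces one; the unpunctured monogon and triangle in Figure~\ref{fig:quivers} consist almost entirely of them), and neither of your proposed fixes works. In characteristic $3$ the cyclic derivative $\partial_\gamma(\gamma^3)=3\gamma^2$ vanishes identically, so the quadratic term is simply absent from $\partial_\gamma W$ and no rescaling of $c$ (which only multiplies the \emph{other} summand by a unit) can restore it; concretely, for the unpunctured monogon with loops $x,y$ the Jacobian ideal of $W=x^3+y^3-m^{-1}c(xy)^m$ in characteristic $3$ is generated by $(yx)^{m-1}y$ and $(xy)^{m-1}x$, whose quotient is infinite-dimensional, while $\gL$ is finite-dimensional. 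Even when $3$ is invertible, dividing the relation by $3$ replaces the scalar by $3^{-1}c_\alpha$ only at those relations indexed by arrows $\alpha$ with $\balpha$ an $f$-fixed loop; the resulting scalar function is in general no longer $g$-invariant, so you obtain $\gL(Q,f,m,c')$ for a different $c'$ and still owe an argument (e.g.\ rescaling the loop arrows) identifying it with $\gL(Q,f,m,c)$. The clean repairs are: either restrict to quivers without $f$-fixed loops (the case of closed surfaces, which is where the lemma is actually applied), or put a coefficient $\tfrac13$ on the cubic terms at $f$-fixed loops and assume the characteristic is not $3$, or abandon honest potentials altogether and use the hyperpotential $(\rho_\alpha)_{\alpha\in Q_1}$ of Theorem~\ref{t:quasi}\eqref{it:t:potential}, which is exactly the characteristic-free device the paper introduces to avoid this problem.
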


By using the compatibility between mutations of quivers with potentials
and mutations of cluster-tilting objects~\cite{BIRS11}, noting that
the vanishing condition needed in~\cite[Theorem~5.2]{BIRS11}
is always satisfied for symmetric
(even for self-injective) algebras, Proposition~\ref{p:QPmut}
together with Theorem~\ref{t:dereq} imply the following derived
equivalence. As we work with quivers with potentials, we have to
impose some restrictions on the characteristic of the ground field.

\begin{cor}
Assume that there are no $2$-cycles in $Q$ passing through the vertex~$k$.
Let $m \colon Q_1 \to \bZ_{>0}$ and $c \colon Q_1 \to K^{\times}$ be
$g$-invariant functions of multiplicities and scalars, respectively.
Let $m'$ and $c'$ be the corresponding
$g'$-invariant functions on $Q'_1$.
Assume that $m$ is admissible and that moreover each of the numbers
$m_\alpha$ is not divisible by the characteristic of $K$.
Then the triangulation algebras $\gL(Q,f,m,c)$ and $\gL(Q',f',m',c')$
are derived equivalent.
\end{cor}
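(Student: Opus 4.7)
My plan is to reduce the statement to an application of Theorem~\ref{t:dereq}\eqref{it:algmut} after realising both triangulation algebras as endomorphism algebras of cluster-tilting objects in a common $2$-Calabi--Yau category, related by an Iyama--Yoshino mutation. The characteristic hypothesis is used precisely so that we can leave the non-commutative formal world of triangulation algebras and work with honest quivers with potentials in the sense of Derksen--Weyman--Zelevinsky, to which the BIRS compatibility between QP-mutation and cluster-tilting mutation can be applied.

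Concretely, I would first invoke the preceding lemma: since every $m_\alpha$ is invertible in $K$, the algebra $\gL=\gL(Q,f,m,c)$ is the Jacobian algebra of the quiver with potential $(Q,W)$ where $W$ is of the form \eqref{e:pot_p} with $p_\alpha(x)=c_\alpha m_\alpha^{-1}x^{m_\alpha}$. By Theorem~\ref{t:quasi}\eqref{it:t:potential} this $\gL$ is $2$-CY-tilted, so there is a $2$-CY triangulated category $\cC$ with cluster-tilting object $T$ such that $\gL\simeq\End_\cC(T)$, and $\gL$ is symmetric by Theorem~\ref{t:quasi}\eqref{it:t:symmetric}. Let $X$ be the indecomposable summand of $T$ corresponding to the vertex $k$ (which has no loops, hence defines a legitimate mutation), and let $T'$ be its Iyama--Yoshino mutation; set $\gL''=\End_\cC(T')$.

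Next I would identify $\gL''$ with $\gL(Q',f',m',c')$. Since $\gL$ is symmetric, the vanishing condition in~\cite[Theorem~5.2]{BIRS11} is automatic, so that reference says $\gL''$ is the Jacobian algebra of the DWZ-mutation $\mu_k(Q,W)$. By the hypothesis that no $2$-cycle of $Q$ passes through $k$, Proposition~\ref{p:QPmut} applies and gives a right equivalence between $\mu_k(Q,W)$ and $(Q',W')$, where $W'$ is the potential associated to the transported $g'$-invariant data $(m',c')$. Hence $\gL''\simeq \mathcal{P}(Q',W')$, and since the $m'_\alpha$ are among the $m_\beta$ (up to the identifications in the mutation rule), they remain invertible in $K$; applying the preceding lemma in the reverse direction identifies $\mathcal{P}(Q',W')\simeq \gL(Q',f',m',c')$.

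Finally, since $\gL$ is symmetric (hence weakly symmetric), Theorem~\ref{t:dereq}\eqref{it:algmut} applies: the Okuyama--Rickard complexes $U^{\pm}_P(\gL)$ at the indecomposable projective $P$ corresponding to $X$ are tilting, and their endomorphism algebra is isomorphic to $\gL''$ and derived equivalent to $\gL$. Combined with the identification $\gL''\simeq \gL(Q',f',m',c')$ from the previous step, this yields the desired derived equivalence. The main potential obstacle is book-keeping at the mutation step: one has to check that the right equivalence of Proposition~\ref{p:QPmut} indeed transports the $g$-invariant data $(m,c)$ to the $g'$-invariant data $(m',c')$ as defined before the proposition, and that the admissibility of $m$ together with the absence of $2$-cycles through $k$ really guarantees the DWZ mutation is defined and produces $(Q',W')$ up to reduction. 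Once these compatibilities are in place, every remaining ingredient is a direct citation.
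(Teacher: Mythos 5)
Your proposal is correct and follows essentially the same route as the paper: realise both triangulation algebras as Jacobian algebras via the preceding lemma (this is where the characteristic hypothesis enters), identify the DWZ-mutation of $(Q,W)$ with $(Q',W')$ via Proposition~\ref{p:QPmut}, invoke the compatibility of QP-mutation with Iyama--Yoshino mutation from \cite{BIRS11} (whose vanishing condition is automatic for symmetric algebras), and conclude with Theorem~\ref{t:dereq}\eqref{it:algmut}. The compatibility caveats you flag at the end are exactly the book-keeping the paper handles through its definition of the transported data $(m',c')$ preceding Proposition~\ref{p:QPmut}.
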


In fact, under the conditions of the corollary we have, in the notations 
of Theorem~\ref{t:dereq},
\begin{align*}
&\End_{\per \gL} U^-_{P_k}(\gL) \simeq \gL' \simeq
\End_{\per \gL} U^+_{P_k}(\gL) \\
&\End_{\per \Gamma} U^-_{P_k}(\Gamma) \simeq \Gamma' \simeq
\End_{\per \Gamma} U^+_{P_k}(\Gamma)
\end{align*}
where
\begin{align*}
\gL = \gL(Q,f,m,c) &,& \Gamma = \Gamma(Q,f,m,c) &,&
\gL' = \gL(Q',f',m',c') &,& \Gamma' = \Gamma(Q',f',m',c')
\end{align*}
and $P_k$ denotes the indecomposable projective module corresponding to the
vertex $k$ over the appropriate algebra.

\bibliographystyle{amsplain}
\bibliography{quasi}

\providecommand{\bysame}{\leavevmode\hbox to3em{\hrulefill}\thinspace}
\providecommand{\MR}{\relax\ifhmode\unskip\space\fi MR }
\providecommand{\MRhref}[2]{%
  \href{http://www.ams.org/mathscinet-getitem?mr=#1}{#2}
}
\providecommand{\href}[2]{#2}
\begin{thebibliography}{10}

\bibitem{AiharaIyama12}
Takuma Aihara and Osamu Iyama, \emph{Silting mutation in triangulated
  categories}, J. Lond. Math. Soc. (2) \textbf{85} (2012), no.~3, 633--668.

\bibitem{BS03}
Jerzy Bia{\l}kowski and Andrzej Skowro{\'n}ski, \emph{On tame weakly symmetric
  algebras having only periodic modules}, Arch. Math. (Basel) \textbf{81}
  (2003), no.~2, 142--154.

\bibitem{BIRS11}
A.~B. Buan, O.~Iyama, I.~Reiten, and D.~Smith, \emph{Mutation of
  cluster-tilting objects and potentials}, Amer. J. Math. \textbf{133} (2011),
  no.~4, 835--887.

\bibitem{BIKR08}
Igor Burban, Osamu Iyama, Bernhard Keller, and Idun Reiten, \emph{Cluster
  tilting for one-dimensional hypersurface singularities}, Adv. Math.
  \textbf{217} (2008), no.~6, 2443--2484.

\bibitem{DWZ08}
Harm Derksen, Jerzy Weyman, and Andrei Zelevinsky, \emph{Quivers with
  potentials and their representations. {I}. {M}utations}, Selecta Math. (N.S.)
  \textbf{14} (2008), no.~1, 59--119.

\bibitem{Dugas10}
Alex Dugas, \emph{A construction of derived equivalent pairs of symmetric
  algebras}, \texttt{arXiv:1005.5152}.

\bibitem{Erdmann90}
Karin Erdmann, \emph{Blocks of tame representation type and related algebras},
  Lecture Notes in Mathematics, vol. 1428, Springer-Verlag, Berlin, 1990.

\bibitem{ES06}
Karin Erdmann and Andrzej Skowro{\'n}ski, \emph{The stable {C}alabi-{Y}au
  dimension of tame symmetric algebras}, J. Math. Soc. Japan \textbf{58}
  (2006), no.~1, 97--128.

\bibitem{ES08}
\bysame, \emph{Periodic algebras}, Trends in representation theory of algebras
  and related topics, EMS Ser. Congr. Rep., Eur. Math. Soc., Z\"urich, 2008,
  pp.~201--251.

\bibitem{FST08}
Sergey Fomin, Michael Shapiro, and Dylan Thurston, \emph{Cluster algebras and
  triangulated surfaces. {I}. {C}luster complexes}, Acta Math. \textbf{201}
  (2008), no.~1, 83--146.

\bibitem{Geiss95}
Christof Geiss, \emph{On degenerations of tame and wild algebras}, Arch. Math.
  (Basel) \textbf{64} (1995), no.~1, 11--16.

\bibitem{HerschendIyama11}
Martin Herschend and Osamu Iyama, \emph{Selfinjective quivers with potential
  and 2-representation-finite algebras}, Compos. Math. \textbf{147} (2011),
  no.~6, 1885--1920.

\bibitem{Holm99}
Thorsten Holm, \emph{Derived equivalence classification of algebras of
  dihedral, semidihedral, and quaternion type}, J. Algebra \textbf{211} (1999),
  no.~1, 159--205.

\bibitem{IyamaReiten08}
Osamu Iyama and Idun Reiten, \emph{Fomin-{Z}elevinsky mutation and tilting
  modules over {C}alabi-{Y}au algebras}, Amer. J. Math. \textbf{130} (2008),
  no.~4, 1087--1149.

\bibitem{IyamaYoshino08}
Osamu Iyama and Yuji Yoshino, \emph{Mutation in triangulated categories and
  rigid {C}ohen-{M}acaulay modules}, Invent. Math. \textbf{172} (2008), no.~1,
  117--168.

\bibitem{Kauer98}
Michael Kauer, \emph{Derived equivalence of graph algebras}, Trends in the
  representation theory of finite-dimensional algebras ({S}eattle, {WA}, 1997),
  Contemp. Math., vol. 229, Amer. Math. Soc., Providence, RI, 1998,
  pp.~201--213.

\bibitem{KellerYang11}
Bernhard Keller and Dong Yang, \emph{Derived equivalences from mutations of
  quivers with potential}, Adv. Math. \textbf{226} (2011), no.~3, 2118--2168.

\bibitem{Labardini09}
Daniel Labardini-Fragoso, \emph{Quivers with potentials associated to
  triangulated surfaces}, Proc. Lond. Math. Soc. (3) \textbf{98} (2009), no.~3,
  797--839.

\bibitem{Ladkani14}
Sefi Ladkani, \emph{2-{CY}-tilted algebras that are not {J}acobian},
  \texttt{arXiv:1403.6814}.

\bibitem{Ladkani12}
\bysame, \emph{On {J}acobian algebras from closed surfaces},
  \texttt{arXiv:1207.3778}.

\bibitem{Ladkani10}
\bysame, \emph{Perverse equivalences, {BB}-tilting, mutations and
  applications}, \texttt{arXiv:1001.4765}.

\bibitem{MarshSchroll13}
Robert Marsh and Sibylle Schroll, \emph{The geometry of {B}rauer graph algebras
  and cluster mutations}, \texttt{arXiv:1309.4239}.

\bibitem{Mizuno12}
Yuya Mizuno, \emph{On mutations of selfinjective quivers with potential},
  \texttt{arXiv:1210.3166}.

\bibitem{Valdivieso13}
Yadira Valdivieso-Diaz, \emph{On {A}uslander-{R}eiten translation in cluster
  categories associated to closed surfaces}, \texttt{arXiv:1309.2708}.

\end{thebibliography}

\end{document}